\documentclass[a4paper,12pt]{article}

\usepackage{amsmath}
\usepackage{amsthm}
\usepackage{enumerate}
\usepackage{amssymb,amsfonts,latexsym,mathtools}
\usepackage{mathrsfs}
\usepackage{bm}
\usepackage{cases}
\usepackage{color}

\newcommand{\Levy}{L\'{e}vy}

\newcommand{\R}{\mathbb{R}}

\newcommand{\Z}{\mathbb{Z}}
\newcommand{\Q}{\mathbb{Q}}
\newcommand{\e}{\varepsilon}

\renewcommand{\P}{\mathbb{P}}
\usepackage{multirow}
\newcommand{\E}{\mathbb{E}}
\usepackage{url}

\numberwithin{equation}{section}

\makeatletter
\renewcommand\section{\@startsection {section}{1}{\z@}%
{-3.5ex \@plus -1ex \@minus -.2ex}%
{2.3ex \@plus.2ex}%
{\normalfont\large\bf}}
\makeatother

\makeatletter
\renewcommand\subsection{\@startsection {subsection}{1}{\z@}%
{-3.5ex \@plus -1ex \@minus -.2ex}%
{2.3ex \@plus.2ex}%
{\normalfont\normalsize\bf}}
\makeatother

\theoremstyle{plain}
\newtheorem{thm}{Theorem}[section]
\newtheorem{lem}[thm]{Lemma}
\newtheorem{cor}[thm]{Corollary}
\newtheorem{prop}[thm]{Proposition}
\theoremstyle{definition}
\newtheorem{Rem}[thm]{Remark}

\allowdisplaybreaks

\usepackage{hyperref}
\hypersetup{
setpagesize=false,
 bookmarksnumbered=true,
 bookmarksopen=true,
 colorlinks=true,
 linkcolor=blue,
 citecolor=red,
}

\begin{document}
\begin{center}
\Large \textbf{Finite-State Time-Inhomogeneous Markov Chains Conditioned to Survive}
\end{center}
\begin{center}
Kohki Iba\footnote{
\begin{tabular}[t]{@{}l@{}}
Affiliation: Graduate School of Science, The University of Osaka, Osaka, Japan.\\
E-mail: \url{kohki.iba@gmail.com}
\end{tabular}
}
\end{center}
\begin{abstract}
We study conditioning on long survival for time-inhomogeneous Markov chains on finite state spaces. We assume that a sequence of substochastic transition kernels converges to an irreducible substochastic kernel and that the transition probabilities admit asymptotic expansions up to order $n^{-2}$. Under these assumptions, we construct a positive space-time harmonic function and prove that the limiting path distributions under conditioning on long survival are given by the corresponding Doob h-transform. We also investigate the long-time behavior of the resulting conditioned process. Finally, we apply our general theory to an elephant random walk, a generalized P\'{o}lya urn model, time-inhomogeneous Wright--Fisher and Moran models, and noncolliding random walks.
\end{abstract}

\medskip
\noindent\textbf{MSC 2020:}
Primary 60J10; Secondary 60J45, 60J20.

\smallskip
\noindent\textbf{Keywords:}
Time-inhomogeneous Markov chain, conditioning, Doob h-transform, survival probability.


\section{Introduction}
\subsection{Overview}
Understanding the behavior of a stochastic process conditioned on an event of probability zero in the underlying path space is one of the fundamental problems in probability theory. In particular, considerable attention has been devoted to Markov chains killed upon hitting an absorbing state or exiting a prescribed domain, with a focus on their behavior under conditioning on survival.

When a time-homogeneous Markov chain is defined on a finite state space, such conditioning problems are closely related to the Perron--Frobenius theory for substochastic transition kernels. Under suitable irreducibility assumptions, the exponential decay rate of the survival probability is characterized by the principal eigenvalue of the transition kernel, and the limiting process obtained by conditioning on long survival can be constructed through the Doob h-transform associated with a corresponding positive eigenfunction. Such conditioning problems arise in a wide variety of stochastic models, including simple random walks, population genetics models, and systems of noncolliding particles. For example, simple random walks conditioned to stay in an interval have been studied by Debs \cite{Debs-conditioning}, while conditioning on nonabsorption in time-homogeneous Wright--Fisher and Moran models has been investigated by Huillet \cite{Huillet,Huillet-Moran}. For noncolliding random walks confined to a finite interval, exact path counts and their asymptotic behavior have been studied by Grabiner \cite{Grabiner} and Krattenthaler \cite{Krattenthaler}. Champagnat and Villemonais \cite{Champagnat-Villemonais-2} developed a general probabilistic framework for studying time-homogeneous Markov processes conditioned on nonabsorption. In the continuous-time setting, Brownian motion conditioned to stay in an interval has been studied by Knight \cite{Knight}, while related conditioning problems for spectrally negative \Levy\ processes have been investigated by Lambert \cite{Lambert}.

In Markov chains arising from stochastic models of real-world phenomena or from stochastic processes with memory, transition probabilities often depend on time. In the time-inhomogeneous setting, the limiting behavior under conditioning on long survival generally cannot be characterized solely by an eigenfunction of a single transition kernel. Instead, one needs to consider harmonic functions depending on both time and space, which makes the analysis considerably more involved. Champagnat and Villemonais \cite{Champagnat-Villeminais} established convergence of survival probability ratios, representations of conditioned processes via positive space-time harmonic functions, and quantitative mixing estimates. However, explicit asymptotic formulas for the survival probabilities themselves and first-order asymptotic expansions of the conditioned transition kernels are not derived in that work.

In this paper, we study this conditioning problem in a general framework for time-inhomogeneous Markov chains on finite state spaces. More precisely, we consider a sequence of substochastic transition kernels $(K_n)_{n\geq 0}$ incorporating killing, which converges to an irreducible substochastic kernel $K$, and assume that asymptotic expansions up to order $\frac{1}{n^2}$ are available for all allowed transitions. Our main objectives are to derive precise asymptotic formulas for survival probabilities and to construct the limiting process obtained by conditioning on long survival. To this end, we construct a positive space-time harmonic function $h$ and show that, for each fixed finite time horizon, the law of the path conditioned on survival up to a distant time converges to that of the process defined by the corresponding Doob h-transform. This provides a construction of time-inhomogeneous Markov chains conditioned never to be killed. We also investigate the long-time behavior of the resulting conditioned processes.

Finally, we apply our general theory to several concrete stochastic models. We consider the following processes:
\begin{itemize}
  \item An elephant random walk conditioned to stay in an interval.
  \item A generalized P\'{o}lya urn model conditioned so that the difference between the largest and smallest color counts remains bounded by a fixed constant.
  \item Wright--Fisher and Moran models conditioned on nonabsorption.
  \item Time-inhomogeneous noncolliding random walks conditioned to stay in an interval.
\end{itemize}
Each of these examples is of considerable interest in its own right and deserves a more detailed investigation. However, since such an investigation would be beyond the scope of the present paper, we restrict ourselves to verifying that each model satisfies the assumptions of our general theory.

\subsection{Notation and Assumptions}
Let $E$ be a finite set, and let $K$ be a nonzero, irreducible substochastic kernel on $E$. Assume that
\begin{align}
\sum_{y\in E}K(x,y)<1\qquad \text{for\ some}\ x\in E.
\end{align}
By the Perron--Frobenius theorem, $K$ has a positive eigenvalue $\lambda$ and corresponding strictly positive left and right eigenvectors $L$ and $R$, respectively, normalized so that
\begin{align}
  LK=\lambda L,\qquad KR=\lambda R,\qquad L,R>0,\qquad  LR=1.
\end{align}
Since $K$ admits killing, we have $0<\lambda<1$. We define the h-transform of $K$ with respect to $R$ by
\begin{align}
Q(x,y):=\frac{R(y)K(x,y)}{\lambda R(x)}\qquad \text{for}\ x,y\in E.
\end{align}
We denote by $\mathbf{Q}_{(m,x)}$ the law of the time-homogeneous Markov chain with transition kernel $Q$, starting from state $x\in E$ at time $m\ge 0$. We write $\zeta$ for its lifetime. Since $Q$ is stochastic, we have
\begin{align}
\label{Q-lifetime}
\mathbf{Q}_{(m,x)}(n<\zeta)=1\qquad \text{for}\ n\ge m.
\end{align}
The kernel $Q$ has a unique invariant distribution $\pi$, given by
\begin{align}
  \pi(x):=L(x)R(x)\qquad \text{for}\ x\in E.
\end{align}
Let $d\ge 1$ be the period of $Q$, and let $E_0,\ldots,E_{d-1}$ be its cyclic classes. These classes are pairwise disjoint and satisfy
\begin{align}
  Q(x,E_{r+1})=1\qquad \text{for}\ x\in E_r,
\end{align}
where $E_d:=E_0$. We define
\begin{align}
  \pi_r(y):=\frac{\pi(y)}{\pi(E_r)}1_{E_r}(y)\qquad \text{for}\ y\in E,\ r=0,...,d-1.
\end{align}
The indices of the cyclic classes and the corresponding distributions $\pi_r$ are understood modulo $d$. The measure $\pi_r$ is the probability distribution obtained by conditioning $\pi$ on $E_r$.

Let $(K_n)_{n\geq 0}$ be a sequence of nonzero, irreducible substochastic kernels on $E$ satisfying the following assumptions:
\begin{itemize}
  \item For all $n\ge 0$ and $x,y\in E$,
  \begin{align}
  \label{H1}
    K_n(x,y)>0\qquad \Longleftrightarrow\qquad K(x,y)>0.
    \tag{H1}
  \end{align}
  \item There exist functions $a,b:E\times E\to \R$ and an integer $N\ge 1$ such that for all $x,y\in E$ with $K(x,y)>0$, 
  \begin{align}
  \label{H2}
    \frac{K_n(x,y)}{K(x,y)}=1+\frac{a(x,y)}{n}+\frac{b(x,y)}{n^2}+O(n^{-3})\qquad \text{for}\ n\ge N.
    \tag{H2}
  \end{align}
  Here, the $O(n^{-3})$ term is uniform in $x$ and $y$.
\end{itemize}

Throughout this paper, all $O(\cdot)$ terms are understood to be uniform in the state variables.

For $m\ge 0$ and $x\in E$, let $\P_{(m,x)}$ denote the law of the time-inhomogeneous Markov chain with transition kernels $(K_n)$, starting from state $x$ at time $m$. As above, $\zeta$ denotes its lifetime. 

We define
\begin{align}
  g(x)&:=\sum_{y\in E}Q(x,y)a(x,y)\qquad \text{for}\ x\in \E,\\
  \kappa&:=\pi[g]=\sum_{y\in E}g(y)\pi(y).
\end{align}
Since $Q$ is irreducible and $\pi[g-\kappa]=0$, there exists a unique function $u:E\to \R$ satisfying the Poisson equation
\begin{align}
\label{Poisson-eq}
  (I-Q)u=g-\kappa,\qquad \pi[u]=0.
\end{align}
We also consider the following additional condition. 
\begin{itemize}
  \item There exists a function $V:E\to \R$ such that
  \begin{align}
  \label{C3}
    a(x,y)=\kappa+V(y)-V(x)\qquad \text{whenever}\ K(x,y)>0.
    \tag{C3}
  \end{align}
\end{itemize}
Under this additional assumption, some of the results in the next subsection admit sharper estimates.

\begin{Rem}
\label{Rem1.1}
For a fixed initial condition, the transition probabilities from states that are unreachable at a given time do not affect the law of the process started from that initial condition. Thus, when applying our results to concrete examples, it may be possible to construct a sequence of kernels satisfying the assumptions of this paper by appropriately specifying the rows corresponding to unreachable states.

For example, for the elephant random walk considered in Subsection \ref{S3.1} and started at the origin, we have $K_0^{\mathrm{ERW}}(0,1)=q$ and $K_0^{\mathrm{ERW}}(0,-1)=1-q$, whereas the rows $K_0^{\mathrm{ERW}}(x,\cdot)$ for $x\neq 0$ are not determined by the law of the process started from the origin. If these rows are set to zero for convenience, the resulting kernel does not have the same support as the corresponding simple random walk kernel, and hence (\ref{H1}) is not satisfied. Nevertheless, the results of this paper can be applied without changing the law of the original process by completing the rows at unreachable states as follows.

More generally, consider a time-inhomogeneous Markov chain, possibly subject to killing, started from state $x_0$ at time $0$. Define the set of states reachable at time $n$ before killing by
\begin{align}
\label{An-def}
 A_n:=\Big\{x\in E:\ \P_{(0,x_0)}(X_n=x,\ n<\zeta)>0\Big\}. 
\end{align}
Suppose that the original transition kernels $(K_n)$ satisfy
\begin{align}
K_n(x,y)>0\ \Longleftrightarrow\  K(x,y)>0\qquad \text{for}\ n\ge 0,\ x\in A_n.
\end{align}
Assume also that the expansion in (\ref{H2}) holds on the reachable rows $x\in A_n$. Increasing the integer $N$ in (\ref{H2}), if necessary, define the completed kernels by
\begin{align}
\label{tildeK}
  \widetilde{K}_n(x,y):=\begin{cases}
K_n(x,y),&\text{for}\ x\in A_n,\\
K(x,y),&\text{for}\ x\notin A_n,\  0\le n<N,\\
\displaystyle K(x,y)\left(1+\frac{a(x,y)}{n}+\frac{b(x,y)}{n^2}\right),&\text{for}\ x\notin A_n,\ n\ge N.
\end{cases}
\end{align}
If $N$ is chosen sufficiently large, $(\widetilde{K}_n)_{n\ge 0}$ is a sequence of nonzero, irreducible substochastic kernels satisfying (\ref{H1}) and (\ref{H2}) on the entire state space. Since this completion leaves the reachable rows $K_n(x,\cdot)$, $x\in A_n$, unchanged, it preserves the law of the process started from $(0,x_0)$. Similarly, for every $m\ge0$ and $x\in A_m$, the law of the process started from state $x$ at time $m$ is also preserved. Consequently, applying the main results below to $(\widetilde{K}_n)$ and restricting their conclusions to initial pairs $(m,x)$ with $x\in A_m$ yields the corresponding results for the original process.
\end{Rem}

\subsection{Main Results}
We now state our main results. 

\begin{thm}
\label{thm1.1}
There exists a function $h:\Z_{\ge0}\times E\to(0,\infty)$ such that, for each fixed $m\ge0$, each $r=0,...,d-1$, and each $x\in E_r$, 
\begin{align}
\label{thm1.1-eq2}
\P_{(m,x)}(n<\zeta)=\pi_{r+n-m}\left[\frac{1}{R}\right]\lambda^{n-m}n^{\kappa}h(m,x)\Big(1+O(n^{-1})\Big)\qquad \text{as}\ n\to \infty.
\end{align}
Here, the function $(m,x)\mapsto\lambda^{-m}h(m,x)$ is positive and space-time harmonic with respect to $(K_m)$, in the sense of (\ref{harmonic}). Moreover, for every $k\ge m\ge 0$, $x\in E$, and $\Lambda_{m,k}\in \sigma(X_m,...,X_k)$, we have
 \begin{align}
 \label{thm1.1-eq1}
   \lim_{n\to \infty}\P_{(m,x)}\Big(\Lambda_{m,k}\mid n<\zeta\Big)=\P_{(m,x)}\left[1_{\Lambda_{m,k}}\cdot \lambda^{m-k}\frac{h(k,X_k)}{h(m,x)}1_{\{k<\zeta\}}\right].
 \end{align}
\end{thm}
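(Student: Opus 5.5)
We reduce everything to products of the $h$-transformed kernels. Define $Q_n(x,y):=R(y)K_n(x,y)/(\lambda R(x))$, so that by (\ref{H2}) $Q_n(x,y)=Q(x,y)\bigl(1+a(x,y)/n+b(x,y)/n^2+O(n^{-3})\bigr)$. Then
\begin{align*}
\P_{(m,x)}(n<\zeta)=\lambda^{n-m}R(x)\,\Bigl(Q_m Q_{m+1}\cdots Q_{n-1}\tfrac{1}{R}\Bigr)(x).
\end{align*}
It therefore suffices to show that $Q_m\cdots Q_{n-1}f(x)=n^{\kappa}\,\pi_{r+n-m}[f]\,\phi(m,x)\bigl(1+O(n^{-1})\bigr)$ for $f=1/R$ and some positive $\phi$. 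We would then set $h(m,x):=\lambda^{m}R(x)\phi(m,x)$, up to the constant normalization that makes $\lambda^{-m}h$ the right object.

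The first step removes the first-order drift by a gauge transformation. We write $Q_n=Q+\frac1n A+O(n^{-2})$ with $A(x,y)=Q(x,y)a(x,y)$, so that $A\mathbf 1=g$. We conjugate by the multiplication operator $D_n=I+\frac1n \mathrm{diag}(u)$, where $u$ solves (\ref{Poisson-eq}), and consider $\widetilde Q_n:=D_n^{-1}Q_nD_{n+1}$. Expanding, and using that $Q$ preserves constants and $(I-Q)u=g-\kappa$, we get
\begin{align*}
\widetilde Q_n\mathbf 1=\Bigl(1+\frac{\kappa}{n}+O(n^{-2})\Bigr)\mathbf 1+O(n^{-2}).
\end{align*}
Consequently $c_n:=\prod_{j}(1+\kappa/j)$ behaves like $C n^{\kappa}(1+O(1/n))$. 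Normalizing, $P_n:=\widetilde Q_n/(1+\kappa/n)$ is a kernel with rows summing to $1+O(n^{-2})$. It is nonnegative for large $n$, and its support equals that of $Q$ by (\ref{H1}), with $P_n=Q+\frac1n B+O(n^{-2})$.

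The core step, and the main obstacle, is a weak-ergodicity statement for the products $P_m\cdots P_{n-1}$, which must carry the period $d$. On each cyclic block we use $Q^d$ restricted to $E_r$, which is primitive. Blocks of $d$ consecutive $P_j$ are uniformly close to $Q^d$, so Dobrushin/Birkhoff contraction of the Hilbert projective metric on each $E_r$ yields
\begin{align*}
P_m\cdots P_{n-1}f(x)\to \psi(m,x)\,\pi_{r+n-m}[f],\qquad x\in E_r .
\end{align*}
We then need the rate $O(n^{-1})$ rather than merely $o(1)$. Here the perturbation is only $O(1/j)$, which is not summable, so the limiting direction drifts. The remedy is to compare with the exact eigenprojections of $P_j$ on cyclic classes. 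We show that the Perron vectors of the $d$-step blocks $P_j\cdots P_{j+d-1}$ move by $O(j^{-2})$ in successive steps after the gauge, because the $1/j$ parts become constant-preserving. Combined with geometric contraction, this gives a Cauchy sequence with tail $\sum_{j\ge n}O(j^{-2})=O(n^{-1})$. The limit $\psi(m,\cdot)$ is positive, by irreducibility and the positivity preserved in the contraction, and row-sum defects of order $O(j^{-2})$ contribute only a factor $1+O(n^{-1})$. Undoing the gauge ($D_n\to I$ with error $O(1/n)$) then gives the asymptotic (\ref{thm1.1-eq2}), with $h$ collecting $\psi$, $c_m^{-1}$, $R(x)$ and powers of $\lambda$.

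Harmonicity and the conditional limit (\ref{thm1.1-eq1}) follow from the Markov property. We have
\begin{align*}
\P_{(m,x)}(n<\zeta)=\sum_y K_m(x,y)\,\P_{(m+1,y)}(n<\zeta).
\end{align*}
Inserting the asymptotics on both sides, with $y\in E_{r+1}$ so that the $\pi$-factor agrees, and letting $n\to\infty$ gives $\lambda^{-m}h(m,x)=\sum_y K_m(x,y)\lambda^{-m-1}h(m+1,y)$. For (\ref{thm1.1-eq1}), we condition on $\mathcal F_k$ and take the ratio
\begin{align*}
\frac{\P_{(k,X_k)}(n<\zeta)}{\P_{(m,x)}(n<\zeta)}\longrightarrow \lambda^{m-k}\frac{h(k,X_k)}{h(m,x)},
\end{align*}
where the $\pi$-factors match because $X_k\in E_{r+k-m}$. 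Since $E$ is finite, the ratio is bounded and dominated convergence finishes the proof.
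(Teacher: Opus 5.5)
Your outline is correct. It reaches the theorem by a deterministic matrix-product argument, whereas the paper argues probabilistically. The decisive idea is the same in both: the Poisson-equation gauge $1+u/n$, which reduces the order-$1/n$ row defects to the constant $\kappa/n$ plus $O(n^{-2})$.

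The paper proceeds as follows:
\begin{enumerate}
\item It writes $\P_{(m,x)}(n<\zeta)=\lambda^{n-m}R(x)e^{\kappa\sum_{k=m}^{n-1}1/k}\,\mathbf{Q}_{(m,x)}[W_{m,n}/R(X_n)]$.
\item It uses the gauge to factor $W_{m,n}$ into a mean-one martingale with uniform $L^p$ bounds times a product of factors $1+O(k^{-2})$.
\item It defines $h(m,x)=e^{\kappa(\gamma-H_{m-1})}R(x)\mathbf{Q}_{(m,x)}[W_{m,\infty}]$ directly and checks harmonicity by the Markov property.
\item It obtains the $O(n^{-1})$ rate by tilting $\mathbf{Q}_{(m,x)}$ with $W_{m,\infty}$. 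The tilted chain has kernels $Q+O(k^{-1})$, and telescoping against the geometric mixing of $Q$ gives $\sum_{k=\lfloor n/2\rfloor}^{n-1}\eta^{n-1-k}k^{-1}=O(n^{-1})$.
\end{enumerate}
In your version, the mass $s_n:=(P_m\cdots P_{n-1}\mathbf 1)(x)$ satisfies $s_{n+1}=s_n(1+O(n^{-2}))$ by positivity alone. Hence $s_n=\psi(m,x)(1+O(n^{-1}))$ with $0<\psi(m,x)<\infty$, and you need no martingale convergence or uniform integrability. Reading harmonicity off the uniqueness of the asymptotic constant is also a clean shortcut. What the paper's route buys in exchange is the pathwise density $W_{m,\infty}$ and the identification of the conditioned law with the $W_{m,\infty}$-tilt of $\mathbf{Q}_{(m,x)}$. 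The paper reuses both throughout the proof of Theorem \ref{thm1.2}.

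First remark: the obstacle you name in the core step is not real. For the normalized rows $\nu_n:=\delta_xP_m\cdots P_{n-1}/s_n$, the difference $\nu_{k+1}-\nu_kQ$ is $O(k^{-1})$, has zero total mass, and is supported on one cyclic class. So the paper's telescoping gives $\|\nu_n-\pi_{r+n-m}\|_{\mathrm{TV}}=O(n^{-1})$ directly, and you can drop the Hilbert-metric and Perron-vector machinery. If you keep it, note two things:
\begin{enumerate}
\item Successive Perron vectors move by $O(j^{-2})$ simply because $P_{j+1}-P_j=O(j^{-2})$, with or without the gauge. The gauge's real job is to make the row-sum defects summable.
\item Birkhoff contraction needs blocks of length $kd$ with $(Q^d|_{E_r})^k>0$, not merely blocks of length $d$.
\end{enumerate}

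Second remark, on bookkeeping:
\begin{enumerate}
\item Your target formula forces $h(m,x)=R(x)\phi(m,x)$, with no factor $\lambda^m$. Also, $\phi(m,x)$ carries the factor $1+u(x)/m$ coming from $D_m$.
\item The product $\prod_j(1+\kappa/j)$ and the gauge $D_n$ are only usable for $j,n\ge N'$, where $N'>|\kappa|$ and $\min_x(1+u(x)/N')>0$. For instance, if $\kappa\le-1$ the factor $1+\kappa/1$ vanishes.
\item For $m<N'$, you must split your product $Q_m\cdots Q_{n-1}$ of $R$-transformed kernels at $N'$. Since $(Q_m\cdots Q_{N'-1})(x,\cdot)$ is supported on $E_{r+N'-m}$, the $\pi$-factor is unchanged. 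This is exactly the paper's backward recursion (\ref{h-def-2}).
\end{enumerate}
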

The function $h$ is defined by (\ref{h-def-1}) for $m\ge N$ and extended to $0\le m<N$ by the backward recursion (\ref{h-def-2}). This theorem is proved through a series of propositions in Subsection \ref{S2.1}.

We define the transformed transition kernel by
\begin{align}
\label{Qn-definition}
  Q_n(x,y):=\lambda^{-1}\frac{h(n+1,y)}{h(n,x)}K_n(x,y)\qquad \text{for}\ x,y\in E,\ n\ge 0.
\end{align}
By Theorem \ref{thm1.1}, each $Q_n$ is a stochastic kernel. For $m\ge 0$ and $x\in E$, we denote by $\Q_{(m,x)}$ the law of the time-inhomogeneous Markov chain with transition kernels $(Q_n)_{n\ge 0}$, starting from $x$ at time $m$. Under this law, $\zeta=\infty$ almost surely. By (\ref{thm1.1-eq1}), this chain can be interpreted as the original chain conditioned never to be killed.

\begin{thm}
\label{thm1.2}
All limits and asymptotic estimates below are understood as $n\to\infty$, with the initial time $m$ fixed. The following assertions hold:
\begin{enumerate}
  \item For $x,y\in E$, we have
\begin{align}
\label{thm1.2-eq1}
    Q_n(x,y)=Q(x,y)\left(1+\frac{a(x,y)-\kappa+u(y)-u(x)}{n}\right)+O(n^{-2}).
  \end{align}
  \item For $r=0,...,d-1,\ x\in E_r$, and $m\ge 0$, we have
  \begin{align}
  \label{thm1.2-eq2}
\Big\|\Q_{(m,x)}(X_n\in \cdot)-\pi_{r+n-m}\Big\|_{\mathrm{TV}}=O(n^{-1}).
\end{align}
\item For $x\in E,\ m\ge 0,$ and $f:E\to \R$, we have
\begin{align}
\label{thm1.2-eq3}
  \frac{1}{n}\sum_{k=m}^{n-1}f(X_k)\to \pi[f]\qquad \Q_{(m,x)}\text{-a.s.}
\end{align}
\end{enumerate}
If condition (\ref{C3}) holds, the conclusions in parts 1 and 2 can be sharpened as follows:
\begin{enumerate}
\item[1'.] For $x,y\in E$, we have
\begin{align}
\label{thm1.2-eq1'}
    Q_n(x,y)=Q(x,y)+O(n^{-2}).
  \end{align}
  \item[2'.] For $r=0,...,d-1,\ x\in E_r$, and $m\ge 0$, we have
  \begin{align}
  \label{thm1.2-eq2'}
\Big\|\Q_{(m,x)}(X_n\in \cdot)-\pi_{r+n-m}\Big\|_{\mathrm{TV}}=O(n^{-2}).
\end{align}
\end{enumerate}
\end{thm}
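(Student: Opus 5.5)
Everything will be derived from the structure of $h$ produced in the proof of Theorem \ref{thm1.1}. The key input is an asymptotic expansion of $h(n,\cdot)$ for large $n$. From (\ref{thm1.1-eq2}) and the construction (\ref{h-def-1}), I expect
\begin{align*}
h(n,x)=c\,R(x)\,n^{-\kappa}\Big(1+\frac{u(x)+\beta}{n}+O(n^{-2})\Big)
\end{align*}
with constants $c>0$ and $\beta$. The first step is to establish this expansion. I would do it by inserting the ansatz $h(n,x)=n^{-\kappa}R(x)(1+w(x)/n+O(n^{-2}))$ into the harmonicity relation $\lambda h(n,x)=\sum_y K_n(x,y)h(n+1,y)$ and expanding with (H2). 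The order-$n^{-1}$ terms give exactly
\begin{align*}
-\kappa+g(x)+(Qw)(x)-w(x)=0,
\end{align*}
that is, $(I-Q)w=g-\kappa$. Hence $w=u+\text{const}$ by uniqueness for (\ref{Poisson-eq}). If the explicit definition (\ref{h-def-1}) does not already give this, I would justify the expansion by a contraction or remainder estimate for the backward recursion. I expect this to be the main obstacle: showing that the $O(n^{-2})$ remainder is genuinely uniform, which requires second-order control coming from the $b$ terms.

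For part 1, substitute this expansion into (\ref{Qn-definition}). We have $((n+1)/n)^{-\kappa}=1-\kappa/n+O(n^{-2})$, and the ratio of correction factors is $1+(u(y)-u(x))/n+O(n^{-2})$. Multiplying by $K_n/K=1+a/n+O(n^{-2})$ and by $R(y)K(x,y)/(\lambda R(x))=Q(x,y)$ gives (\ref{thm1.2-eq1}).

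For part 1' under (C3), take $w=V$. Then $g=\kappa+QV-V$, so $u=V-\pi[V]$. The coefficient becomes $a-\kappa+V(y)-V(x)-(V(y)-V(x))$. This looks like it is nonzero. Rechecking signs, the expansion gives $(Qw-w)=\kappa-g$, so $w=-u+\text{const}$ and the coefficient is $a-\kappa-(u(y)-u(x))$. The sign in (\ref{thm1.2-eq1}) as stated then forces $u$ to be defined through $h$'s own correction, so I would fix the sign bookkeeping carefully. With (C3), $a-\kappa=V(y)-V(x)$ and $u=V-\pi[V]$ (up to the sign convention matching the statement), so the $1/n$ coefficient cancels identically. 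This yields $Q_n=Q+O(n^{-2})$.

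For parts 2 and 2', write $Q_n=Q+D_n$ with $\|D_n\|=O(n^{-1})$, or $O(n^{-2})$ under (C3). Note that $Q_n$ has the same support as $Q$ by (H1), so $Q_n$ maps $E_r$ into $E_{r+1}$. Then
\begin{align*}
\Q_{(m,x)}(X_n\in\cdot)=\delta_xQ_m\cdots Q_{n-1}.
\end{align*}
Group the steps into $d$-blocks: $Q^d$ restricted to each $E_r$ is primitive, with geometric convergence $\|\mu Q^{dj}-\pi_r\|\le C\rho^j$ for probability measures $\mu$ on $E_r$. A telescoping or Duhamel expansion gives
\begin{align*}
\delta_xQ_m\cdots Q_{n-1}-\delta_xQ^{n-m}=\sum_{k}\delta_xQ_m\cdots Q_{k-1}D_kQ^{n-k-1}.
\end{align*}
Each $D_k$ has zero row sums, so $\nu D_k$ is a signed measure of total mass zero that is supported within a single cyclic class. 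Hence $\|\nu D_kQ^{n-k-1}\|_{\mathrm{TV}}\le C\|D_k\|\rho^{(n-k)/d}$. Summing gives $\sum_k k^{-p}\rho^{(n-k)/d}=O(n^{-p})$, with $p=1$ or $p=2$. The term $\delta_xQ^{n-m}$ is within $O(\rho^{n/d})$ of $\pi_{r+n-m}$, which yields both (\ref{thm1.2-eq2}) and (\ref{thm1.2-eq2'}).

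For part 3, fix $f$. Set $\bar f=f-\pi[f]$ and let $v$ solve $(I-Q)v=\bar f$. Then
\begin{align*}
\sum_{k=m}^{n-1}\bar f(X_k)=\sum_k\big(v(X_{k+1})-Q_kv(X_k)\big)+\sum_k(Q_k-Q)v(X_k)+v(X_m)-v(X_n).
\end{align*}
The first sum is a martingale with bounded increments, so by the strong law for martingales it is $o(n)$ almost surely. The second sum is $O(\log n)$, since $\|Q_k-Q\|=O(k^{-1})$. The boundary terms are bounded. Dividing by $n$ gives (\ref{thm1.2-eq3}).
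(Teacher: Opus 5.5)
There is a genuine gap at exactly the step you flag as the main obstacle. Parts 1, 1$'$ and 2$'$ all rest on the expansion $h(n,x)=cR(x)n^{-\kappa}\bigl(1+\frac{u(x)+\beta}{n}+O(n^{-2})\bigr)$, and the proposal does not prove it. Substituting the ansatz into the harmonicity relation only identifies $w$ \emph{given} that an expansion of this form with an $O(n^{-2})$ remainder exists. The proof of Theorem \ref{thm1.1} provides only $\mathbf{Q}_{(n,x)}[W_{n,\infty}]=1+O(n^{-1})$, i.e.\ (\ref{QW-asym}), with no information on the $x$-dependence at order $n^{-1}$. A remainder estimate built on the first-order corrector alone does not close either. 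With $\psi_m=1+\frac{A}{m}$ one only gets $F_m\psi_{m+1}-\psi_m=O(m^{-2})$, and summing the backward recursion in sup norm returns an error $O(\sum_{k\ge m}k^{-2})=O(m^{-1})$, no better than the input.

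The missing idea, which is the paper's refinement (\ref{QW-asym-ex}), is a second-order corrector $\psi_m=1+\frac{A}{m}+\frac{B}{m^2}$ with $A=u+c_0$. Write $F_m=Q+\frac{D_1}{m}+\frac{D_2}{m^2}+O(m^{-3})$ with $D_1=T-\kappa Q$ and $D_2=U-\kappa T+\frac{\kappa^2}{2}Q$; this is where $b$ enters. The second Poisson equation $(I-Q)B=-QA+D_1A+D_2\,1$ is solvable if and only if $c_0=\alpha:=\pi[Tu]+\pi[U1]-\frac{\kappa^2}{2}$, and then $\epsilon_m:=F_m\psi_{m+1}-\psi_m=O(m^{-3})$. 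The difference $\delta_m:=\mathbf{Q}_{(m,\cdot)}[W_{m,\infty}]-\psi_m$ satisfies $\delta_m=F_m\delta_{m+1}+\epsilon_m$, hence $\delta_m(x)=\mathbf{Q}_{(m,x)}[W_{m,n}\delta_n(X_n)]+\sum_{k=m}^{n-1}\mathbf{Q}_{(m,x)}[W_{m,k}\epsilon_k(X_k)]$. $F_m$ is not a contraction, since its row sums are $1+\frac{g-\kappa}{m}+O(m^{-2})$. What controls these iterates is the uniform moment bound (\ref{QW-uniform}), which together with $\delta_n\to0$ gives $\|\delta_m\|_\infty=O(m^{-2})$. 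Since $e^{\kappa(\gamma-H_{n-1})}=n^{-\kappa}\bigl(1+\frac{\kappa}{2n}+O(n^{-2})\bigr)$, this is your expansion with $c=1$ and $\beta=\alpha+\frac{\kappa}{2}$. The constant is irrelevant for part 1, but the uniform $O(n^{-2})$ remainder is essential.

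Your part 1$'$ also needs repair, though the fix is immediate. Your first computation, $(I-Q)w=g-\kappa$ and hence $w=u+\text{const}$, was correct and matches the sign in (\ref{thm1.2-eq1}). The rechecking step is what went wrong: $Qw-w=\kappa-g$ is the same equation and does not yield $w=-u+\text{const}$. Under (\ref{C3}) one has $g-\kappa=QV-V=-(I-Q)V$, so $u=\pi[V]-V$ (not $V-\pi[V]$; correspondingly $w=-V+\text{const}$, not $w=V$). Then $a(x,y)-\kappa+u(y)-u(x)=\bigl(V(y)-V(x)\bigr)-\bigl(V(y)-V(x)\bigr)=0$, which gives 1$'$ directly from 1 with no appeal to sign conventions.

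The remaining parts are sound. Your Duhamel expansion from time $m$ in parts 2 and 2$'$ is the same mechanism as the paper's (\ref{mun-pi}), and it handles $0\le m<N$ without conditioning on $X_N$. Parts 2 and 3 need only $\|Q_n-Q\|=O(n^{-1})$, which already follows from (\ref{Qn-definition}), (\ref{h-def-1}) and (\ref{QW-asym}), so they do not depend on the gap. In part 3 your Poisson-equation and martingale decomposition is a valid alternative to the paper's argument. The paper instead transfers the ergodic theorem for the homogeneous $Q$-chain through the equivalence of $\Q_{(m,x)}$ and $\mathbf{Q}_{(m,x)}$ (Proposition \ref{prop2.8}). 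Your route needs only $Q_k\to Q$, while the paper's transfers every almost-sure property of the $Q$-chain at once.
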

Here, $\|\cdot\|_{\mathrm{TV}}$ denotes total variation distance. This theorem is proved through a series of propositions in Subsection \ref{S2.2}.

\subsection{Organization}
The remainder of this paper is organized as follows. In Section \ref{S2}, we prove our main results. Theorem \ref{thm1.1} will be proved in Subsection \ref{S2.1}, while Theorem \ref{thm1.2} will be proved in Subsection \ref{S2.2}. In Section \ref{S3}, we apply our general theory to several concrete Markov chain models.


\section{Conditioning Problem}
\label{S2}
\subsection{Conditioning Limit}
\label{S2.1}
In this subsection, we prove Theorem \ref{thm1.1}. To this end, we establish several lemmas. For $n\ge m\ge 0$, we define the product of the one-step likelihood ratios along the path segment $(X_m,\ldots,X_n)$ by
\begin{align}
  R_{m,m}=1,\qquad R_{m,n}=\prod_{k=m}^{n-1}\frac{K_k(X_k,X_{k+1})}{K(X_k,X_{k+1})}.
\end{align}

\begin{lem}
For $n\ge m\ge 0$, $x\in E$, and $\Lambda_{m,n}\in \sigma(X_m,...,X_n)$, we have
\begin{align}
\label{Plambda=Q}
  \P_{(m,x)}\Big(\Lambda_{m,n},\ n<\zeta\Big)=\lambda^{n-m}R(x)\mathbf{Q}_{(m,x)}\left[1_{\Lambda_{m,n}}\cdot\frac{R_{m,n}}{R(X_n)}\right].
\end{align}
In particular, we have
\begin{align}
 \label{lifetime-asym} \P_{(m,x)}(n<\zeta)=\lambda^{n-m}R(x)\mathbf{Q}_{(m,x)}\left[\frac{R_{m,n}}{R(X_n)}\right].
\end{align}
\end{lem}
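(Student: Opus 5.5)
The plan is to reduce the statement to cylinder events and compute both sides path by path. Every $\Lambda_{m,n}\in\sigma(X_m,\ldots,X_n)$ is a finite disjoint union of cylinder events
\[
\{X_m=x_m,\ldots,X_n=x_n\},
\]
since $E$ is finite. Both sides of (\ref{Plambda=Q}) are additive in $\Lambda_{m,n}$, so it suffices to treat a single path $(x_m,\ldots,x_n)$ with $x_m=x$. Paths with $x_m\neq x$ contribute zero to both sides.

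Fix such a path. On the left, survival up to time $n$ together with following this path has probability
\[
\prod_{k=m}^{n-1}K_k(x_k,x_{k+1}),
\]
because killing is encoded as the missing mass of the substochastic kernels. On the right, the $\mathbf{Q}_{(m,x)}$-probability of the path is $\prod_{k=m}^{n-1}Q(x_k,x_{k+1})$, which has no lifetime issue by (\ref{Q-lifetime}).

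The two products vanish for exactly the same paths. Indeed, $Q(x,y)>0$ if and only if $K(x,y)>0$, and by (\ref{H1}) this holds if and only if $K_n(x,y)>0$. So it remains to check paths along which every $K(x_k,x_{k+1})>0$. For these, $R_{m,n}$ is well defined on the path, and the definition of $Q$ gives a telescoping product:
\[
\prod_{k=m}^{n-1}Q(x_k,x_{k+1})=\lambda^{-(n-m)}\frac{R(x_n)}{R(x_m)}\prod_{k=m}^{n-1}K(x_k,x_{k+1}).
\]
Multiplying by $R_{m,n}/R(x_n)$ replaces each factor $K(x_k,x_{k+1})$ by $K_k(x_k,x_{k+1})$ and cancels the factor $R(x_n)$. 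Multiplying further by $\lambda^{n-m}R(x)$, with $x=x_m$, then yields exactly $\prod_{k=m}^{n-1}K_k(x_k,x_{k+1})$, which is the left-hand side.

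Summing over paths gives (\ref{Plambda=Q}), and taking $\Lambda_{m,n}$ to be the whole space gives (\ref{lifetime-asym}). The case $n=m$ is trivial, since then $R_{m,m}=1$ and both sides equal $1_{\Lambda_{m,m}}(x)$. The only point needing care is the zero-probability paths, where $R_{m,n}$ would involve $0/0$; this is handled by (\ref{H1}) as described above, so I expect no real obstacle.
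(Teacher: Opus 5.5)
Your proposal is correct and follows essentially the same route as the paper: on each cylinder event $\{X_m=x_m,\ldots,X_n=x_n\}$ you use the telescoping identity $\prod_{k}Q(x_k,x_{k+1})=\lambda^{-(n-m)}\frac{R(x_n)}{R(x_m)}\prod_{k}K(x_k,x_{k+1})$, and then you sum over the cylinders that make up $\Lambda_{m,n}\cap\{n<\zeta\}$. Your explicit appeal to (\ref{H1}) to exclude the paths where $R_{m,n}$ would be $0/0$ spells out what the paper handles implicitly by discarding a $\P_{(m,x)}$-null set.
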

\begin{proof}
Set $x_m=x$ and define
\begin{align}
  A_{x_m,...,x_n}:=\Big\{X_m=x_m,\ X_{m+1}=x_{m+1},\ ...,\ X_n=x_n\Big\}\qquad \text{for}\ x_{m+1},...,x_n\in E.
\end{align}
By the definition of $Q$, we have
\begin{align*}
\label{Axm...xn}
\P_{(m,x)}(A_{x_m,...,x_n})&=\prod_{k=m}^{n-1}K_k(x_k,x_{k+1})\\
&=\prod_{k=m}^{n-1}K(x_k,x_{k+1})\cdot \frac{K_k(x_k,x_{k+1})}{K(x_k,x_{k+1})}\\
&=\lambda^{n-m}\frac{R(x)}{R(x_n)}\prod_{k=m}^{n-1}Q(x_k,x_{k+1})\prod_{k=m}^{n-1}\frac{K_k(x_k,x_{k+1})}{K(x_k,x_{k+1})}\\
&=\lambda^{n-m}R(x)\mathbf{Q}_{(m,x)}\left[ \frac{R_{m,n}}{R(X_n)},\ A_{x_m,...,x_n}\right].
\stepcounter{equation}\tag{\theequation}
\end{align*}
Up to a $\P_{(m,x)}$-null set, the event $\Lambda_{m,n}\cap\{n<\zeta\}$ is a disjoint union of cylinder events of the form $A_{x_m,\ldots,x_n}$. 
Summing (\ref{Axm...xn}) over these disjoint cylinder events and using (\ref{Q-lifetime}), we obtain the desired assertion.
\end{proof}

For $n\ge m\ge N$, we define
\begin{align}
  W_{m,n}:=\exp \left(-\kappa \sum_{k=m}^{n-1}\frac{1}{k}\right)R_{m,n}.
\end{align}

\begin{lem}
\label{lem2.2}
For $m\ge N$ and $x\in E$, we have
 \begin{align}
   W_{m,\infty}:=\lim_{n\to \infty}W_{m,n}\in (0,\infty),\qquad \mathbf{Q}_{(m,x)}\text{-a.s.}
 \end{align}
For every $p\in \R$, there exists a constant $C_{1,p}<\infty$ such that 
 \begin{align}
 \label{QW-uniform}
   \sup_{n\ge m}\mathbf{Q}_{(m,x)}[W_{m,n}^p]\le e^{\frac{C_{1,p}}{m-1}}.
 \end{align}
Moreover, as $m\to\infty$, we have
 \begin{align}
 \label{QW-asym}
   \mathbf{Q}_{(m,x)}[W_{m,\infty}]=1+O(m^{-1}).
 \end{align}
\end{lem}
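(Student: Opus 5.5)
We will write $\log W_{m,n}$ as a sum of martingale increments plus a summable drift, and handle each part separately. By (\ref{H2}), for $k\ge N$ and $K(x,y)>0$,
\begin{align*}
\log\frac{K_k(x,y)}{K(x,y)}=\frac{a(x,y)}{k}+\frac{b(x,y)-a(x,y)^2/2}{k^2}+O(k^{-3}).
\end{align*}
Hence
\begin{align*}
\log W_{m,n}=\sum_{k=m}^{n-1}\frac{a(X_k,X_{k+1})-\kappa}{k}+\sum_{k=m}^{n-1}\frac{c(X_k,X_{k+1})}{k^2},
\end{align*}
with $c$ bounded uniformly in $k$. The second sum converges absolutely, and its tail from $m$ is $O(1/(m-1))$.

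For the first sum we split off the $\mathbf{Q}$-martingale part. Under $\mathbf{Q}_{(m,x)}$ we have $\mathbf{Q}[a(X_k,X_{k+1})\mid X_k]=g(X_k)$, so
\begin{align*}
a(X_k,X_{k+1})-\kappa=\xi_{k+1}+g(X_k)-\kappa,\qquad \xi_{k+1}:=a(X_k,X_{k+1})-g(X_k).
\end{align*}
The $\xi_{k+1}$ are bounded martingale differences. Using the Poisson equation (\ref{Poisson-eq}), $g(X_k)-\kappa=u(X_k)-Qu(X_k)$. Writing $u(X_k)-Qu(X_k)=[u(X_{k+1})-Qu(X_k)]+[u(X_k)-u(X_{k+1})]$ gives a second bounded martingale difference $\eta_{k+1}$ plus a telescoping term. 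After Abel summation,
\begin{align*}
\sum_{k=m}^{n-1}\frac{u(X_k)-u(X_{k+1})}{k}=\frac{u(X_m)}{m}-\frac{u(X_n)}{n-1}-\sum_{k=m+1}^{n-1}u(X_k)\Big(\frac1{k-1}-\frac1k\Big),
\end{align*}
which is $O(1/(m-1))$ uniformly in $n$ and converges a.s. Thus
\begin{align*}
\log W_{m,n}=M_n+D_n,\qquad M_n:=\sum_{k=m}^{n-1}\frac{\xi_{k+1}+\eta_{k+1}}{k},
\end{align*}
where $M_n$ is a martingale with increments bounded by $C/k$, and $\sup_n|D_n|\le C/(m-1)$ with $D_n$ converging a.s. Since $\sum_k k^{-2}<\infty$, $M_n$ is $L^2$-bounded and converges a.s. and in $L^2$. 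Therefore $\log W_{m,n}$ converges a.s. to a finite limit, which gives $W_{m,\infty}\in(0,\infty)$.

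For the moment bound (\ref{QW-uniform}), apply Azuma's exponential supermartingale (Hoeffding's lemma) to the increments $pd_{k+1}/k$, where $|d_{k+1}|\le C$:
\begin{align*}
\mathbf{Q}\big[e^{pM_n}\big]\le \exp\Big(\frac{p^2C^2}{2}\sum_{k\ge m}k^{-2}\Big)\le \exp\Big(\frac{p^2C^2}{2(m-1)}\Big).
\end{align*}
Since $e^{pD_n}\le e^{|p|C/(m-1)}$, we get $\mathbf{Q}[W_{m,n}^p]\le e^{C_{1,p}/(m-1)}$ uniformly in $n$ and $x$.

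For (\ref{QW-asym}), the bound with $p=2$ gives uniform integrability, so $\mathbf{Q}[W_{m,\infty}]=\lim_n\mathbf{Q}[W_{m,n}]$. It remains to show $\mathbf{Q}[W_{m,n}]=1+O(m^{-1})$ uniformly in $n$.
\begin{itemize}
\item Upper bound: $\mathbf{Q}[W_{m,n}]\le e^{C_{1,1}/(m-1)}=1+O(m^{-1})$.
\item Lower bound: write $W=e^{M+D}\ge e^{-C/(m-1)}e^{M}$. Jensen's inequality and $\mathbf{Q}[M_n]=0$ give $\mathbf{Q}[e^{M_n}]\ge1$, hence $\mathbf{Q}[W_{m,n}]\ge e^{-C/(m-1)}$.
\end{itemize}
Together these give $|\mathbf{Q}[W_{m,\infty}]-1|=O(m^{-1})$.

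The main obstacle is the $1/k$-weighted drift term $\sum_k (g(X_k)-\kappa)/k$. It is not a martingale and is not absolutely summable, so the Poisson-equation decomposition combined with Abel summation is the essential step. One must also check that every constant is uniform in $x$ and $n$ and that each bound scales like $1/(m-1)$, which follows since $\sum_{k\ge m}k^{-2}\le 1/(m-1)$.
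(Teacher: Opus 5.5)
Your proof is correct, but it takes a different route from the paper.

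\textbf{The paper's route (multiplicative).} It treats $\phi_n=1+u/n$ as an approximate space-time harmonic function for the tilted kernel $F_n(x,y)=e^{-\kappa/n}Q(x,y)K_n(x,y)/K(x,y)$. It then forms the Doob-transformed stochastic kernels $G_n$ and factors
$W_{m,n}=\frac{\phi_m(X_m)}{\phi_n(X_n)}\,M_{m,n}\,\prod_{k=m}^{n-1}\frac{F_k\phi_{k+1}(X_k)}{\phi_k(X_k)}$.
Here $M_{m,n}=\prod_k G_k/Q$ is a positive mean-one martingale. The last product is $1+O(m^{-1})$ because $F_n\phi_{n+1}=\phi_n+O(n^{-2})$. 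The remaining steps are:
\begin{itemize}
\item moments of $M_{m,n}$ come from a second-order Taylor bound using $\sum_y Q(x,y)\epsilon_k(x,y)=0$;
\item positivity of the limit comes from the case $p=-1$ plus Fatou;
\item (\ref{QW-asym}) comes from $\mathbf{Q}_{(m,x)}[M_{m,\infty}]=1$.
\end{itemize}

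\textbf{Your route (additive).} You decompose $\log W_{m,n}$ additively. You use the same Poisson corrector $u$ to turn the non-summable drift $\sum_k (g(X_k)-\kappa)/k$ into bounded martingale increments plus an Abel-summed telescoping remainder. Exponential moments then follow from Azuma--Hoeffding, and the lower half of (\ref{QW-asym}) follows from Jensen.

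\textbf{What your version buys.} It is more elementary and has a small technical advantage. It never divides by $\phi_n$ or $F_n\phi_{n+1}$, so it does not need these to be positive. In the paper's argument, that positivity tacitly requires $n$ to be large compared with $\|u\|_\infty$, not merely $n\ge N$. You also bound $W_{m,n}^p$ directly, rather than bounding $M_{m,n}^p$ and then recombining the factors.

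\textbf{What the paper's version buys.} It makes the structure $W_{m,\infty}=\phi_m(x)M_{m,\infty}(1+O(m^{-1}))$ visible. It also fits the change-of-measure $\widetilde{\mathbf{Q}}_{(m,x)}$ used afterwards. Its approximate-harmonicity idea is what gets pushed to second order, via $\psi_m=1+A/m+B/m^2$, in the refinement (\ref{QW-asym-ex}). That refinement uses only (\ref{QW-uniform}) and (\ref{QW-asym}) as inputs, so your proof would serve equally well there.
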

\begin{proof}
We define the following quantities:
 \begin{align}
   \phi_n(x)&:=1+\frac{u(x)}{n},\\
   F_n(x,y)&:=e^{-\frac{\kappa}{n}}Q(x,y)\frac{K_n(x,y)}{K(x,y)},\\
   G_n(x,y)&:=\frac{F_n(x,y)\phi_{n+1}(y)}{F_n\phi_{n+1}(x)},\\
   M_{m,n}&:=\prod_{k=m}^{n-1}\frac{G_k(X_k,X_{k+1})}{Q(X_k,X_{k+1})}.
 \end{align}
We have
 \begin{align*}
 \label{Wmn-expand}
 W_{m,n}&=\exp \left(-\kappa \sum_{k=m}^{n-1}\frac{1}{k}\right)\prod_{k=m}^{n-1}\frac{K_k(X_k,X_{k+1})}{K(X_k,X_{k+1})}\\
 &=\frac{\phi_m(X_m)}{\phi_n(X_n)}\left(\prod_{k=m}^{n-1}\frac{\phi_{k+1}{(X_{k+1})}}{\phi_k(X_{k})}\right)\left(\prod_{k=m}^{n-1}e^{-\frac{\kappa}{k}}\frac{K_k(X_k,X_{k+1})}{K(X_k,X_{k+1})}\right)\\
 &=\frac{\phi_m(X_m)}{\phi_n(X_n)}\cdot M_{m,n}\prod_{k=m}^{n-1}\frac{F_k\phi_{k+1}(X_k)}{\phi_k(X_k)}.
\stepcounter{equation}\tag{\theequation}
 \end{align*}
We examine each factor on the right-hand side of (\ref{Wmn-expand}).
\begin{itemize}
  \item (The first factor) Since $\phi_n(X_n)\to 1$, the first factor of (\ref{Wmn-expand}) converges to $\phi_m(X_m)$.
  \item (The third factor) By assumption (\ref{H2}), we have
  \begin{align*}
  F_n(x,y)-Q(x,y)&=Q(x,y)(e^{-\frac{\kappa}{n}}-1)+Q(x,y)\frac{e^{-\frac{\kappa}{n}}a(x,y)}{n}+O(n^{-2})\\
 &=\frac{a(x,y)Q(x,y)-\kappa Q(x,y)}{n}+O(n^{-2}).
 \stepcounter{equation}\tag{\theequation}
  \end{align*}
 Thus, we have
  \begin{align}
  \label{Fn-expand}
    F_n(x,y)=Q(x,y)+\frac{a(x,y)Q(x,y)-\kappa Q(x,y)}{n}+O(n^{-2}).
  \end{align}
By (\ref{Poisson-eq}), we have
   \begin{align*}
   \label{Fnphi-n+1}
  F_n\phi_{n+1}(x)&=\sum_{y\in E}F_n(x,y)\left(1+\frac{u(y)}{n+1}\right)\\
  &=1+\frac{g(x)-\kappa}{n}+\frac{Qu(x)}{n}+O(n^{-2})\\
  &=\phi_n(x)+O(n^{-2}).
  \stepcounter{equation}\tag{\theequation}
  \end{align*}
 Since there exists a constant $C_2<\infty$ such that
  \begin{align*}
    \sum_{k=m}^\infty \left|\log \left(\frac{F_k\phi_{k+1}(X_k)}{\phi_k(X_k)}\right)\right|&=\sum_{k=m}^\infty \Big|\log \Big(1+O(k^{-2})\Big)\Big|=\sum_{k=m}^\infty O(k^{-2})\le \frac{C_2}{m},
    \stepcounter{equation}\tag{\theequation}
  \end{align*}
the infinite product in the third factor of (\ref{Wmn-expand}) converges absolutely, and
  \begin{align}
  \label{prodFphi-asy}
   \prod_{k=m}^\infty \frac{F_k\phi_{k+1}(X_k)}{\phi_k(X_k)}=1+O(m^{-1}).
  \end{align}
  \item (The second factor) For $n\ge N$, each $G_n$ is a stochastic transition kernel. Thus, by the Markov property, $(M_{m,n})_{n\ge m}$ is a positive $\mathbf{Q}_{(m,x)}$-martingale with mean one. The martingale convergence theorem implies that there exists $M_{m,\infty}\ge 0$ such that
  \begin{align}
  \label{Mmn-lim}
    M_{m,n}\to M_{m,\infty},\qquad \mathbf{Q}_{(m,x)}\text{-a.s.}
  \end{align}
By (\ref{Fn-expand}), there exist functions $\e_k:E\times E\to \R$ and a constant $C_3<\infty$ such that
  \begin{align*}
  \frac{G_k(x,y)}{Q(x,y)}=1+\e_k(x,y),\qquad |\e_k(x,y)|\le \frac{C_3}{k}.
\stepcounter{equation}\tag{\theequation} 
  \end{align*}
Moreover, we have
  \begin{align}
    \sum_{y\in E}Q(x,y)\e_k(x,y)=\sum_{y\in E}G_k(x,y)-\sum_{y\in E}Q(x,y)=0.
  \end{align}
Thus, for each $p\in\mathbb R$, Taylor's theorem yields constants $C_{4,p},C_{5,p}<\infty$ such that
  \begin{align*}
  \label{Taylor-exp}
    \sum_{y\in E}Q(x,y)\Big(1+\e_k(x,y)\Big)^p&\le \sum_{y\in E}Q(x,y)\Big(1+p\e_k (x,y)+C_{4,p}\e_k(x,y)^2\Big)\\
    &\le 1+\frac{C_{5,p}}{k^2}.
    \stepcounter{equation}\tag{\theequation} 
  \end{align*}
Hence, iterating conditional expectations and using (\ref{Taylor-exp}), we obtain
  \begin{align*}
  \label{QMp}
\mathbf{Q}_{(m,x)}[M_{m,n}^p]\le \prod_{k=m}^{n-1}\left(1+\frac{C_{5,p}}{k^2}\right)\le e^{\frac{C_{5,p}}{m-1}}.
\stepcounter{equation}\tag{\theequation}
  \end{align*}
In particular, setting $p=-1$ and applying Fatou's lemma, we have
  \begin{align}
    \mathbf{Q}_{(m,x)}[M_{m,\infty}^{-1}]\le \varliminf_{n\to \infty}\mathbf{Q}_{(m,x)}[M_{m,n}^{-1}]\le e^{\frac{C_{5,-1}}{m-1}}<\infty.
  \end{align}
This implies $M_{m,\infty}>0,\ \mathbf{Q}_{(m,x)}$-a.s. Finally, taking $p=2$ in (\ref{QMp}) shows that $(M_{m,n})$ is bounded in $L^2$ and hence uniformly integrable. Thus, we have
\begin{align}
  M_{m,n}\to M_{m,\infty}\qquad \text{in}\ L^1(\mathbf{Q}_{(m,x)}).
\end{align}
Therefore, we have
\begin{align}
\label{QM-lim}
  \mathbf{Q}_{(m,x)}[M_{m,\infty}]=\lim_{n\to \infty}\mathbf{Q}_{(m,x)}[M_{m,n}]=1.
\end{align}
\end{itemize}
By (\ref{Wmn-expand}), (\ref{prodFphi-asy}), and (\ref{Mmn-lim}), we have
\begin{align}
W_{m,\infty}
=\phi_m(x)M_{m,\infty}\Big(1+O(m^{-1})\Big).
\end{align}
Taking expectations and using (\ref{QM-lim}), we obtain
\begin{align}
  \mathbf Q_{(m,x)}[W_{m,\infty}]
=\phi_m(x)\Big(1+O(m^{-1})\Big)
=1+O(m^{-1}).
\end{align}
This completes the proof.
\end{proof}

Under condition (\ref{C3}), we obtain the following pathwise estimates:

\begin{cor}
Assume that condition (\ref{C3}) holds. There exists a constant $C_6<\infty$ such that for $m\ge N$ and $x\in E$, 
\begin{align}
  e^{-\frac{C_6}{m}}\le W_{m,\infty}\le e^{\frac{C_6}{m}},\qquad \mathbf{Q}_{(m,x)}\text{-a.s.}
\end{align}
Moreover, for $n\ge m\ge N$ we have
\begin{align}
  \Big|W_{m,\infty}-W_{m,n}\Big|\le \frac{C_6e^{\frac{C_6}{m}}}{n},\qquad \mathbf{Q}_{(m,x)}\text{-a.s.}
\end{align}
\end{cor}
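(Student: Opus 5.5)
Under (\ref{C3}) the first-order part of the likelihood ratio telescopes along the path, so the randomness of $W_{m,n}$ is only of order $k^{-2}$ per step. For $k\ge N$ and every transition $K(X_k,X_{k+1})>0$ allowed under $\mathbf{Q}_{(m,x)}$, (\ref{H2}) gives
\begin{align*}
\log\frac{K_k(X_k,X_{k+1})}{K(X_k,X_{k+1})}=\frac{a(X_k,X_{k+1})}{k}+O(k^{-2})=\frac{\kappa}{k}+\frac{V(X_{k+1})-V(X_k)}{k}+O(k^{-2}),
\end{align*}
where the $O(k^{-2})$ term is uniform in the states. Enlarging $N$ if necessary, each ratio is bounded away from $0$, so the logarithm is legitimate. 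Summing from $k=m$ to $n-1$ and subtracting $\kappa\sum_{k=m}^{n-1} k^{-1}$, I would obtain
\begin{align*}
\log W_{m,n}=\sum_{k=m}^{n-1}\frac{V(X_{k+1})-V(X_k)}{k}+\sum_{k=m}^{n-1}O(k^{-2}).
\end{align*}

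The key step is an Abel summation of the first sum:
\begin{align*}
\sum_{k=m}^{n-1}\frac{V(X_{k+1})-V(X_k)}{k}=\frac{V(X_n)}{n-1}-\frac{V(X_m)}{m}+\sum_{k=m+1}^{n-1}V(X_k)\Big(\frac{1}{k-1}-\frac{1}{k}\Big).
\end{align*}
Since $E$ is finite, $\|V\|_\infty<\infty$. The boundary term at $n$ is $O(n^{-1})$ and tends to zero. The term $-V(X_m)/m$ is $O(m^{-1})$. The series in $k$ has terms bounded by $\|V\|_\infty/(k(k-1))$, so it converges absolutely, with tail from $n$ onward at most $\|V\|_\infty/(n-1)$. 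The $O(k^{-2})$ series also converges absolutely, with tail $O(n^{-1})$.

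Consequently $\log W_{m,n}$ converges pathwise, which identifies $W_{m,\infty}$ of Lemma \ref{lem2.2}. Moreover $|\log W_{m,\infty}|\le C/m$ for a deterministic constant $C$ independent of $m\ge N$ and $x$, using $1/(m-1)\le 2/m$. This gives the first inequality.

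For the second assertion, I would compare the two logarithms:
\begin{align*}
|\log W_{m,\infty}-\log W_{m,n}|\le \frac{C'}{n},
\end{align*}
obtained by collecting the boundary term $V(X_n)/(n-1)$ and the two tails beyond $n$. I would also note $|\log W_{m,n}|\le C/m$ uniformly in $n\ge m$. The mean value theorem for $\exp$ then yields
\begin{align*}
|W_{m,\infty}-W_{m,n}|\le e^{\max(|\log W_{m,\infty}|,|\log W_{m,n}|)}\frac{C'}{n}\le \frac{C'e^{C/m}}{n}.
\end{align*}
Taking $C_6=\max(C,C')$ finishes the argument.

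The only delicate point is bookkeeping. The $O(k^{-2})$ remainders must be uniform, which is guaranteed by (\ref{H2}) together with the finiteness of $E$. The bound must also hold $\mathbf{Q}_{(m,x)}$-a.s., which is fine because only transitions with $K(x,y)>0$ occur under $\mathbf{Q}_{(m,x)}$. Finally, the constants must not depend on $m$, $n$ or $x$. I expect no genuine obstacle beyond this.
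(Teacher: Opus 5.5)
Your proposal is correct and follows essentially the paper's argument: (\ref{C3}) makes the $1/k$ part of $\log W_{m,n}$ telescope, Abel summation gives $|\log W_{m,n}|\le C/m$ uniformly in $n\ge m$, and the mean value theorem for $\exp$ turns an $O(1/n)$ bound on $\log W_{m,\infty}-\log W_{m,n}$ into the second assertion. The only minor difference is in how that $O(1/n)$ bound is obtained. The paper gets it from the cocycle identity $W_{m,l}=W_{m,n}W_{n,l}$, giving $|\log W_{m,\infty}-\log W_{m,n}|=|\log W_{n,\infty}|\le C_6/n$, and this also covers the case $n=m$, where your Abel formula with $V(X_n)/(n-1)$ does not apply. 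You instead bound the tails directly. Also, no enlargement of $N$ is needed: by (\ref{H1}), the finitely many $k\ge N$ at which the ratio is not close to $1$ can be absorbed into the constant.
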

\begin{proof}
By (\ref{H2}) and (\ref{C3}), we have
\begin{align*}
\log W_{m,n}&=-\kappa \sum_{k=m}^{n-1}\frac{1}{k}+\sum_{k=m}^{n-1}\log \left(1+\frac{\kappa+V(X_{k+1})-V(X_k)}{k}+\frac{b(X_k,X_{k+1})}{k^2}+O(k^{-3})\right)\\
&=\sum_{k=m}^{n-1}\frac{V(X_{k+1})-V(X_k)}{k}+\sum_{k=m}^{n-1}\frac{b(X_k,X_{k+1})-\frac{1}{2}a(X_k,X_{k+1})^2}{k^2}+O\left(\sum_{k=m}^{n-1}k^{-3}\right).
\stepcounter{equation}\tag{\theequation}
\end{align*}
We have
\begin{align*}
\left|\sum_{k=m}^{n-1}\frac{V(X_{k+1})-V(X_k)}{k}\right|&=\left|-\frac{V(X_m)}{m}+\frac{V(X_n)}{n}+\sum_{k=m}^{n-1}\frac{V(X_{k+1})}{k(k+1)}\right|\\
&\le \|V\|_\infty\left(\frac{1}{m}+\frac{1}{n}+\sum_{k=m}^{n-1}\frac{1}{k(k+1)}\right)\\
&\le \frac{2\|V\|_\infty}{m}.
\stepcounter{equation}\tag{\theequation}
\end{align*}
Since $a$ and $b$ are bounded, there exists a constant $C_6<\infty$ such that
\begin{align}
  \Big|\log W_{m,n}\Big|\le \frac{2\|V\|_\infty}{m}+\left(\|b\|_\infty+\frac{\|a\|_\infty^2}{2}\right)\sum_{k=m}^{n-1}\frac{1}{k^2}+O\left(\sum_{k=m}^{n-1}k^{-3}\right)\le \frac{C_6}{m}.
\end{align}
Letting $n\to\infty$, we obtain the first assertion.

Next, we prove the second assertion. Since $W_{m,l}=W_{m,n}W_{n,l}$ for $m\le n\le l$, we have
\begin{align}
  \Big|\log W_{m,l}-\log W_{m,n}\Big|=\Big|\log W_{n,l}\Big|\le\frac{C_6}{n}.
\end{align}
Letting $l\to \infty$, we have
\begin{align}
    \Big|\log W_{m,\infty}-\log W_{m,n}\Big|=\Big|\log W_{n,\infty}\Big|\le\frac{C_6}{n}.
\end{align}
Thus, by the mean value theorem, we have
\begin{align}
  \Big|W_{m,\infty}-W_{m,n}\Big|=\Big|e^{\log W_{m,\infty}}-e^{\log W_{m,n}}\Big|\le e^{\frac{C_6}{m}} \Big|\log W_{m,\infty}-\log W_{m,n}\Big|\le\frac{C_6e^{\frac{C_6}{m}}}{n}.
\end{align}
This completes the proof.
\end{proof}

Let $H_m$ denote the $m$-th harmonic number:
\begin{align}
  H_0:=0,\qquad H_m:=\sum_{k=1}^m\frac{1}{k}.
\end{align}
Let $\gamma$ denote Euler's constant. For $m\ge N$ and $x\in E$, we define
\begin{align}
\label{h-def-1}
  h(m,x):=e^{\kappa(\gamma-H_{m-1})}R(x)\mathbf{Q}_{(m,x)}[W_{m,\infty}].
\end{align}
By Lemma \ref{lem2.2}, we have $0<h<\infty$.

\begin{prop}
\label{lem2.3}
For $m\ge N$ and $x\in E$, we have
 \begin{align}
 \label{harmonic}
  \sum_{y\in E} K_m(x,y)h(m+1,y)=\lambda h(m,x).
 \end{align}
In particular, the function $(m,x)\mapsto \lambda^{-m}h(m,x)$ is a space-time harmonic function for $(K_m).$
\end{prop}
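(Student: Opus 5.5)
The plan is a one-step Markov decomposition of $W_{m,\infty}$ under $\mathbf{Q}_{(m,x)}$, followed by the change of measure from $Q$ back to $K_m$. For $m\ge N$ the telescoping identity $W_{m,n}=W_{m,m+1}W_{m+1,n}$ holds for all $n\ge m+1$. Letting $n\to\infty$ and using Lemma \ref{lem2.2}, we get
\begin{align*}
W_{m,\infty}=e^{-\frac{\kappa}{m}}\frac{K_m(X_m,X_{m+1})}{K(X_m,X_{m+1})}\,W_{m+1,\infty}
\end{align*}
$\mathbf{Q}_{(m,x)}$-a.s. Here $W_{m+1,\infty}$ is the limit of a functional of the shifted path $(X_{m+1},X_{m+2},\dots)$. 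The existence of this limit holds $\mathbf{Q}_{(m+1,y)}$-a.s. for each $y$, hence $\mathbf{Q}_{(m,x)}$-a.s. by the Markov property.

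Next I would condition on $X_{m+1}$. The Markov property of the time-homogeneous chain $Q$ gives
\begin{align*}
\mathbf{Q}_{(m,x)}[W_{m,\infty}]=e^{-\frac{\kappa}{m}}\sum_{y\in E}Q(x,y)\frac{K_m(x,y)}{K(x,y)}\,\mathbf{Q}_{(m+1,y)}[W_{m+1,\infty}],
\end{align*}
where the sum runs over $y$ with $K(x,y)>0$. Integrability is justified by (\ref{QW-uniform}) with $p=1$ and Fatou, so all expectations are finite and nonnegative. Tonelli therefore suffices and no further uniform integrability is needed.

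Then I substitute $Q(x,y)=R(y)K(x,y)/(\lambda R(x))$. This cancels $K(x,y)$ and gives
\begin{align*}
R(x)\mathbf{Q}_{(m,x)}[W_{m,\infty}]=\frac{e^{-\kappa/m}}{\lambda}\sum_{y}K_m(x,y)R(y)\mathbf{Q}_{(m+1,y)}[W_{m+1,\infty}].
\end{align*}
It remains to handle the prefactors in (\ref{h-def-1}). Since $H_m=H_{m-1}+\frac1m$, we have $e^{\kappa(\gamma-H_{m-1})}e^{-\kappa/m}=e^{\kappa(\gamma-H_m)}$. Multiplying through by $e^{\kappa(\gamma-H_{m-1})}$ yields exactly $h(m,x)=\lambda^{-1}\sum_y K_m(x,y)h(m+1,y)$, which is (\ref{harmonic}). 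Dividing by $\lambda^{m+1}$ shows that $\lambda^{-m}h(m,x)$ is space-time harmonic, i.e.\ $\sum_y K_m(x,y)\lambda^{-(m+1)}h(m+1,y)=\lambda^{-m}h(m,x)$.

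The only delicate point, which I expect to be the main obstacle, is rigor in the first step. One must identify the a.s. limit $W_{m+1,\infty}$ under $\mathbf{Q}_{(m,x)}$ with the shifted version of the limit under $\mathbf{Q}_{(m+1,X_{m+1})}$. I would handle this by defining $W_{m+1,\infty}:=\limsup_n W_{m+1,n}$ as a measurable path functional of the shifted path. The rest is algebraic.
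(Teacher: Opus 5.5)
Your proposal is correct and matches the paper's proof step for step: the one-step Markov decomposition $\mathbf{Q}_{(m,x)}[W_{m,\infty}]=e^{-\kappa/m}\sum_y Q(x,y)\frac{K_m(x,y)}{K(x,y)}\mathbf{Q}_{(m+1,y)}[W_{m+1,\infty}]$, then the substitution $Q(x,y)=R(y)K(x,y)/(\lambda R(x))$, then the identity $H_m=H_{m-1}+\frac{1}{m}$. Your justification of the a.s.\ factorization $W_{m,\infty}=W_{m,m+1}W_{m+1,\infty}$ and of the measurability of the limit functional spells out what the paper compresses into ``by the Markov property.''
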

\begin{proof}
By the Markov property under $\mathbf{Q}_{(m,x)}$, we have
\begin{align*}
  \sum_{y\in E} K_m(x,y)h(m+1,y)&=e^{\kappa(\gamma-H_m)}\sum_{y\in E}K_m(x,y)R(y)\mathbf{Q}_{(m+1,y)}[W_{m+1,\infty}]\\
  &=e^{\kappa(\gamma-H_{m-1})}e^{-\frac{\kappa}{m}}\sum_{y\in E} R(y)K(x,y)\cdot \frac{K_m(x,y)}{K(x,y)}\mathbf{Q}_{(m+1,y)}[W_{m+1,\infty}]\\
  &=\lambda R(x)e^{\kappa(\gamma-H_{m-1})}e^{-\frac{\kappa}{m}}\sum_{y\in E} Q(x,y) \frac{K_m(x,y)}{K(x,y)}\mathbf{Q}_{(m+1,y)}[W_{m+1,\infty}]\\
  &=\lambda R(x)e^{\kappa(\gamma-H_{m-1})}\mathbf{Q}_{(m,x)}[W_{m,\infty}]\\
  &=\lambda h(m,x).
\stepcounter{equation}\tag{\theequation}
\end{align*}  
This completes the proof.
\end{proof}

\begin{lem}
There exist constants $C_7<\infty$ and $\eta\in (0,1)$ such that for any $f:E\to \R,\ n\ge m\ge 0,\ r=0,...,d-1$, and $x\in E_r$, we have
\begin{align}
\label{Qpi-speed}
  \Big|\mathbf{Q}_{(m,x)}[f(X_n)]-\pi_{r+n-m}[f]\Big|\le C_7\|f\|_\infty\eta^{n-m}.
\end{align}
\end{lem}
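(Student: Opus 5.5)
The estimate is the standard geometric convergence of a finite irreducible periodic stochastic matrix to its periodic equilibrium. I will derive it from the aperiodicity of $Q^d$ restricted to each cyclic class. Since $Q$ is time-homogeneous, $\mathbf{Q}_{(m,x)}[f(X_n)]=Q^{n-m}f(x)$, so it suffices to show
\begin{align*}
\big|Q^jf(x)-\pi_{r+j}[f]\big|\le C_7\|f\|_\infty\eta^{j}\qquad\text{for } j\ge0,\ x\in E_r .
\end{align*}

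First I record the cyclic structure. Because $Q(x,E_{r+1})=1$ for $x\in E_r$, the kernel $P:=Q^d$ maps each $E_r$ into itself. Its restriction $P_r$ to $E_r$ is irreducible and aperiodic; this is the standard consequence of the definition of the period and the cyclic classes of the irreducible chain $Q$.

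Next I identify the invariant laws. From $\pi Q=\pi$ and the cyclic structure, $\pi(E_r)$ is the same for all $r$, so $\pi(E_r)=1/d$. Moreover $\pi_r Q=\pi_{r+1}$. Indeed, $(\pi 1_{E_r})Q$ is supported on $E_{r+1}$, and summing over $r$ gives $\pi$; comparing restrictions to $E_{r+1}$ yields $(\pi1_{E_r})Q=\pi1_{E_{r+1}}$. Hence $\pi_rP_r=\pi_r$, so $\pi_r$ is the unique invariant law of $P_r$.

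Then I apply the Doeblin bound. Since $P_r$ is a finite irreducible aperiodic stochastic matrix, there is $s\ge1$ with $P_r^s>0$ entrywise on $E_r$. Taking $s$ as the maximum over $r$ of these exponents works for every $r$, since positivity persists under further multiplication by a stochastic matrix. Let $\delta:=\min_r\min_{x,y\in E_r}P_r^s(x,y)>0$. The standard Doeblin coupling then gives
\begin{align*}
\sup_{x\in E_r}\big|P^{\ell}f(x)-\pi_r[f]\big|\le 2\|f\|_\infty(1-\delta)^{\lfloor \ell/s\rfloor}.
\end{align*}

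Finally, for general $j$ I write $j=\ell d+i$ with $0\le i<d$. For $x\in E_r$,
\begin{align*}
Q^jf(x)=P^\ell(Q^if)(x).
\end{align*}
The function $Q^if$ restricted to $E_r$ has sup norm at most $\|f\|_\infty$. Also
\begin{align*}
\pi_r[Q^if]=(\pi_rQ^i)[f]=\pi_{r+i}[f]=\pi_{r+j}[f],
\end{align*}
where the last equality holds because indices are taken mod $d$. Therefore
\begin{align*}
\big|Q^jf(x)-\pi_{r+j}[f]\big|\le 2\|f\|_\infty(1-\delta)^{\lfloor \ell/s\rfloor}.
\end{align*}
Choosing $\eta:=(1-\delta)^{1/(ds)}$ and absorbing the floor functions into a constant $C_7$ gives the claim, uniformly in $m$, $n$, $r$, $x$ and $f$.

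The only mildly delicate step is the identity $\pi_rQ=\pi_{r+1}$ together with the equal masses $\pi(E_r)=1/d$. Everything else is classical finite Markov chain theory.
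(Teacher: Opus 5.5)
Your proposal is correct and follows essentially the same route as the paper: restrict $Q^d$ to each cyclic class $E_r$, use geometric convergence of the irreducible aperiodic kernel $P_r$ to $\pi_r$, and handle the remainder by writing $n-m=\ell d+i$ and using $\pi_rQ^i=\pi_{r+i}$. The only difference is that you prove the geometric bound with a Doeblin argument and check $\pi_rQ=\pi_{r+1}$ directly, where the paper cites Klenke. In the degenerate case $\delta=1$ (all cyclic classes singletons) your formula gives $\eta=0$, so simply take any $\eta\in(0,1)$.
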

\begin{proof}
By a standard property of the cyclic decomposition (see, e.g., Theorem 18.4 of \cite{Klenke}), $P_r:=Q^d|_{E_r\times E_r}$ is an irreducible and aperiodic transition kernel on $E_r.$ Its invariant distribution is $\pi_r.$ By Section 18.4 of \cite{Klenke}, there exist constants $C_{8,r}<\infty$ and $\alpha_r\in (0,1)$ such that for every function $g:E_r\to \R$,
\begin{align}
  \Big|P_r^kg(x)-\pi_r[g]\Big|\le C_{8,r}\|g\|_\infty\alpha_r^k\qquad \text{for}\ k\ge 0,\ x\in E_r.
\end{align}
Set
\begin{align}
  C_9:=\max_{0\le r\le d-1}C_{8,r}<\infty,\qquad \alpha:=\max_{0\le r\le d-1}\alpha_r<1.
\end{align}
Let $n-m=kd+s$ with $k\ge 0$ and $0\le s\le d-1$. Since
\begin{align}
  \pi_{r+n-m}=\pi_{r+s}=\pi_rQ^s,
\end{align}
we have
\begin{align*}
  \Big|\mathbf{Q}_{(m,x)}[f(X_n)]-\pi_{r+n-m}[f]\Big|&= \Big|P_r^k(Q^sf)(x)-\pi_r[Q^sf]\Big|\\
  &\le C_9\|Q^s f\|_\infty\alpha^k\\
  &\le C_9\|f\|_\infty\alpha^k.
  \stepcounter{equation}\tag{\theequation}
\end{align*}
The assertion follows by setting $\eta=\alpha^{\frac{1}{d}}\in(0,1)$ and $C_7=C_9\eta^{-(d-1)}$. This completes the proof.
\end{proof}

We now prove (\ref{thm1.1-eq2}) for $m\ge N$.

\begin{prop}
 For $m\ge N$ and $x\in E_r$, we have
 \begin{align}
 \label{lifetime-limit}
   \P_{(m,x)}(n<\zeta)=\pi_{r+n-m}\left[\frac{1}{R}\right]\lambda^{n-m}n^{\kappa}h(m,x)\Big(1+O(n^{-1})\Big).
 \end{align}
\end{prop}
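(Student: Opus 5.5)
The plan is to start from (\ref{lifetime-asym}) and separate the deterministic harmonic-number factor from the path functional. Since $R_{m,n}=\exp\bigl(\kappa(H_{n-1}-H_{m-1})\bigr)W_{m,n}$ and $H_{n-1}=\log n+\gamma+O(n^{-1})$, we get
\begin{align*}
\P_{(m,x)}(n<\zeta)=\lambda^{n-m}n^{\kappa}e^{\kappa(\gamma-H_{m-1})}R(x)\,\mathbf{Q}_{(m,x)}\Big[\frac{W_{m,n}}{R(X_n)}\Big]\Big(1+O(n^{-1})\Big).
\end{align*}
Comparing with (\ref{h-def-1}), it suffices to show, with $f:=1/R>0$,
\begin{align*}
\mathbf{Q}_{(m,x)}[W_{m,n}f(X_n)]=\pi_{r+n-m}[f]\,\mathbf{Q}_{(m,x)}[W_{m,\infty}]\Big(1+O(n^{-1})\Big).
\end{align*}

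\textbf{Step 1 (replace $W_{m,n}$ by $W_{m,\infty}$).} Use the cocycle identity $W_{m,\infty}=W_{m,n}W_{n,\infty}$ and the Markov property at time $n$. This gives $\mathbf{Q}_{(m,x)}[W_{m,\infty}f(X_n)]=\mathbf{Q}_{(m,x)}\bigl[W_{m,n}f(X_n)\,\mathbf{Q}_{(n,X_n)}[W_{n,\infty}]\bigr]$. By (\ref{QW-asym}), the inner expectation is $1+O(n^{-1})$ uniformly in $X_n$. Positivity then yields the replacement with a multiplicative $1+O(n^{-1})$ error, and no pathwise estimate is needed.

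\textbf{Step 2 (decorrelate at the intermediate time $j=\lfloor n/2\rfloor$).} Condition on $\mathcal F_j$ to write $\mathbf{Q}_{(m,x)}[W_{m,\infty}f(X_n)]=\mathbf{Q}_{(m,x)}\bigl[W_{m,j}\,\Psi_j(X_j)\bigr]$, where $\Psi_j(y):=\mathbf{Q}_{(j,y)}[W_{j,\infty}f(X_n)]$. Apply the decomposition (\ref{Wmn-expand}) from the proof of Lemma \ref{lem2.2} on $[j,\infty)$:
\begin{align*}
W_{j,\infty}=\phi_j(X_j)M_{j,\infty}\bigl(1+O(j^{-1})\bigr),
\end{align*}
where the $O(j^{-1})$ is uniform over paths by (\ref{prodFphi-asy}). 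Since $M_{j,\cdot}$ is a uniformly integrable martingale, $\mathbf{Q}_{(j,y)}[M_{j,\infty}f(X_n)]=\mathbf{Q}_{(j,y)}[M_{j,n}f(X_n)]$. The latter equals $\mathbf{E}^G_{(j,y)}[f(X_n)]$, the expectation under the chain with kernels $(G_k)$. Hence
\begin{align*}
\Psi_j(y)=\phi_j(y)\,\mathbf{E}^G_{(j,y)}[f(X_n)]\bigl(1+O(n^{-1})\bigr),
\end{align*}
and the same formula with $f\equiv1$ gives $\mathbf{Q}_{(j,y)}[W_{j,\infty}]=\phi_j(y)(1+O(n^{-1}))$.

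\textbf{Step 3 (mixing of the perturbed chain).} This is the main obstacle. I need $\mathbf{E}^G_{(j,y)}[f(X_n)]=\pi_{r+n-m}[f]+O(n^{-1})$ for $y$ in the cyclic class reached at time $j$. By (\ref{H1}), $G_k$ has the same support as $Q$, so the cyclic classes are preserved and the class index is right. Let $\mu_k$ be the law at time $k$ and $\nu_k:=\pi_{(\cdot)}$ the corresponding $\pi$-class measure, so $\nu_{k+1}=\nu_kQ$. The error recursion is
\begin{align*}
\mu_{k+1}-\nu_{k+1}=(\mu_k-\nu_k)Q+\mu_k(G_k-Q).
\end{align*}
The perturbation satisfies $\|G_k-Q\|=O(k^{-1})$. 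Each $\mu_k(G_k-Q)$ is a signed measure of zero mass supported within one cyclic class. Iterating and applying (\ref{Qpi-speed}) to these zero-mass class-supported measures gives
\begin{align*}
\|\mu_n-\nu_n\|_{\mathrm{TV}}\le C\eta^{n-j}+C\sum_{k=j}^{n-1}\frac{\eta^{n-1-k}}{k}=O(n^{-1}).
\end{align*}

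\textbf{Conclusion.} Combining, $\Psi_j(y)=\pi_{r+n-m}[f]\,\mathbf{Q}_{(j,y)}[W_{j,\infty}]\,(1+O(n^{-1}))$, using that $\pi[f]$ is bounded below. Substituting back and using $W_{m,j}W_{j,\infty}=W_{m,\infty}$ with the Markov property gives $\mathbf{Q}_{(m,x)}[W_{m,\infty}f(X_n)]=\pi_{r+n-m}[f]\,\mathbf{Q}_{(m,x)}[W_{m,\infty}](1+O(n^{-1}))$. Together with Step 1, this proves (\ref{lifetime-limit}).
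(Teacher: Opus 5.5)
Your argument is correct. Its core, Step 3, is the same telescoping estimate as (\ref{mun-pi-expand})--(\ref{mun-pi}), but you apply it to a different perturbed chain than the paper does.

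The paper tilts $\mathbf{Q}_{(m,x)}$ by the exact density $W_{m,\infty}/\mathbf{Q}_{(m,x)}[W_{m,\infty}]$. The tilted measure $\widetilde{\mathbf{Q}}_{(m,x)}$ is again Markov, with kernels (\ref{tildeQ-trans}) that are $Q+O(k^{-1})$ by (\ref{QW-asym}). So the mixing estimate is run once, from time $m$, and a single identity closes the proof:
\begin{align*}
\mathbf{Q}_{(m,x)}[W_{m,n}f(X_n)]=\mathbf{Q}_{(m,x)}[W_{m,\infty}]\,\widetilde{\mathbf{Q}}_{(m,x)}\bigl[f(X_n)/\mathbf{Q}_{(n,X_n)}[W_{n,\infty}]\bigr].
\end{align*}

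You instead use the explicit approximate transform $(G_k)$ from the proof of Lemma \ref{lem2.2}. Its likelihood ratio $M_{j,\infty}$ differs from $W_{j,\infty}$ by the factor $\phi_j(X_j)\Pi_j$, where $\Pi_j:=\prod_{k\ge j}F_k\phi_{k+1}(X_k)/\phi_k(X_k)$. The factor $\Pi_j$ depends on the path and is only $1+O(j^{-1})$. You therefore have to restart at $j=\lfloor n/2\rfloor$ so that this correction becomes uniformly $1+O(n^{-1})$. You then reassemble through the Markov property, the cocycle $W_{m,\infty}=W_{m,j}W_{j,\infty}$, and the uniform integrability of $M_{j,\cdot}$. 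All of this bookkeeping is multiplicative and relies on $f>0$ and on $\pi_{r+n-m}[f]$ being bounded below, which is harmless for $f=1/R$.

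The paper's route is shorter and gives the additive bound $O(n^{-1})\|f\|_\infty$ for every $f$. Since $\widetilde{\mathbf{Q}}_{(m,x)}=\Q_{(m,x)}$ (Proposition \ref{prop2.8}), it also yields the total-variation estimate (\ref{mun-pi}) that is reused for Theorem \ref{thm1.2}. Your route never needs the transition kernels of the exactly tilted measure: the chain you analyze has explicit kernels. It also only uses $G_k$ for large $k\ge j$, so positivity of $\phi_k$ is automatic there.
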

\begin{proof}
Define a probability measure $\widetilde{\mathbf{Q}}_{(m,x)}$ by
\begin{align}
\label{Qtilde-Q-infty}
  \frac{d\widetilde{\mathbf{Q}}_{(m,x)}}{d\mathbf{Q}_{(m,x)}}=\frac{W_{m,\infty}}{\mathbf{Q}_{(m,x)}[W_{m,\infty}]}\qquad \text{on}\ \sigma(X_m,X_{m+1},...).
\end{align}
By the Markov property and $W_{m,\infty}=W_{m,n}W_{n,\infty}$, we have
\begin{align}
  \label{Qtilde-Q-mn} \left.\frac{d\widetilde{\mathbf{Q}}_{(m,x)}}{d\mathbf{Q}_{(m,x)}}\right|_{\sigma(X_m,...,X_n)}=W_{m,n}\frac{\mathbf{Q}_{(n,X_n)}[W_{n,\infty}]}{\mathbf{Q}_{(m,x)}[W_{m,\infty}]}.
\end{align}
Under $\widetilde{\mathbf{Q}}_{(m,x)}$, the process $(X_k)_{k\ge m}$ is a time-inhomogeneous Markov chain with transition kernels
\begin{align}
\label{tildeQ-trans}
  \widetilde{Q}_k(x,y):=e^{-\frac{\kappa}{k}}Q(x,y)\frac{K_k(x,y)}{K(x,y)}\frac{\mathbf{Q}_{(k+1,y)}[W_{k+1,\infty}]}{\mathbf{Q}_{(k,x)}[W_{k,\infty}]}.
\end{align}
Thus, by (\ref{H2}) and (\ref{QW-asym}), we have
\begin{align}
\label{tildeQk-Q-difference}
  \widetilde{Q}_k(x,y)=Q(x,y)+O(k^{-1}).
\end{align}

Since $m$ is fixed and $n\to\infty$, we may assume that $n>2m$. Let $\mu_n:=\widetilde{\mathbf{Q}}_{(m,x)}(X_n\in \cdot)$ and $j:=[\frac{n}{2}]$. We have
\begin{align*}
\label{mun-pi-expand}
  \mu_n-\pi_{r+n-m}&=\mu_jQ^{n-j}+\sum_{k=j}^{n-1}\Big(\mu_{k+1}Q^{n-1-k}-\mu_k Q^{n-k}\Big)-\pi_{r+j-m}Q^{n-j}\\
  &=(\mu_j-\pi_{r+j-m})Q^{n-j}+\sum_{k=j}^{n-1}\Big(\mu_{k+1}-\mu_k Q\Big)Q^{n-1-k}.
  \stepcounter{equation}\tag{\theequation}
\end{align*}
By (\ref{Qpi-speed}), the total variation norm of the first term on the right-hand side of (\ref{mun-pi-expand}) is bounded as follows:
\begin{align*}
&\Big\|(\mu_j-\pi_{r+j-m})Q^{n-j}\Big\|_{\text{TV}}\\
&\qquad =\sup_{A\subset E}\left|\sum_{x\in E_{r+j-m}}\Big(\mu_j(x)-\pi_{r+j-m}(x)\Big)Q^{n-j}(x,A)\right|\\
&\qquad =\sup_{A\subset E}\left|\sum_{x\in E_{r+j-m}}\Big(\mu_j(x)-\pi_{r+j-m}(x)\Big)\Big(Q^{n-j}(x,A)-\pi_{r+n-m}(A)\Big)\right|\\
&\qquad\le  2C_7 \eta^{n-j}.
\stepcounter{equation}\tag{\theequation}
\end{align*}
By (\ref{Qpi-speed}) and (\ref{tildeQk-Q-difference}), there exists a constant $C_{10}<\infty$ such that each summand in the second term on the right-hand side of (\ref{mun-pi-expand}) satisfies
\begin{align*}
\label{mu-muQ}
  &\Big\|\Big(\mu_{k+1}-\mu_k Q\Big)Q^{n-1-k}\Big\|_{\text{TV}}\\
  &\qquad =\sup_{A\subset E}\left|\sum_{y\in E}\Big(\mu_{k+1}(y)-\mu_kQ(y)\Big)\Big(Q^{n-1-k}(y,A)-\pi_{r+n-m} (A)\Big)\right|\\
  &\qquad \le C_7 \eta^{n-1-k}\sum_{y\in E}\mu_k(y)\Big\|\widetilde{Q}_k(y,\cdot)-Q(y,\cdot)\Big\|_{\text{TV}}\\
  &\qquad \le \frac{C_{10}\eta^{n-1-k}}{k}.
\stepcounter{equation}\tag{\theequation}
\end{align*}
Thus, we have
\begin{align}
\label{mun-pi}
  \Big\|\mu_n-\pi_{r+n-m}\Big\|_{\text{TV}}\le 2C_7\eta^{n-j}+C_{10}\sum_{k=j}^{n-1}\frac{\eta^{n-1-k}}{k}=O(j^{-1})=O(n^{-1}).
\end{align}
Therefore, combining (\ref{Qtilde-Q-mn}) and (\ref{QW-asym}), for every function $f:E\to \R$, we have
\begin{align*}
\label{QWf-asym}
  \mathbf{Q}_{(m,x)}[W_{m,n}f(X_n)]&=\mathbf{Q}_{(m,x)}[W_{m,\infty}]\widetilde{\mathbf{Q}}_{(m,x)}\left[\frac{f(X_n)}{\mathbf{Q}_{(n,X_n)}[W_{n,\infty}]}\right]\\
  &=\mathbf{Q}_{(m,x)}[W_{m,\infty}]\Big(\pi_{r+n-m}[f]+O(n^{-1})\|f\|_\infty\Big).
  \stepcounter{equation}\tag{\theequation}
\end{align*}
We have
\begin{align*}
\label{expkappa-asym}
  \exp\left(\kappa\sum_{k=m}^{n-1}\frac{1}{k}\right)&=\exp\Big(\kappa\Big(\log n+\gamma -H_{m-1}+O(n^{-1})\Big)\Big)\\
  &=n^\kappa e^{\kappa(\gamma-H_{m-1})}\Big(1+O(n^{-1})\Big).
\stepcounter{equation}\tag{\theequation}
\end{align*}
Taking $f=\frac{1}{R}$ in (\ref{QWf-asym}) and combining the resulting estimate with (\ref{lifetime-asym}) and (\ref{expkappa-asym}), we obtain the desired assertion.
\end{proof}

We now prove (\ref{thm1.1-eq1}) for the case $m\ge N$.

\begin{prop}
\label{prop2.7}
For $k\ge m\ge N,\ x\in E$, and $\ \Lambda_{m,k}\in\sigma(X_m,...,X_k)$, we have
 \begin{align}
 \label{Conditioning}
   \lim_{n\to \infty}\P_{(m,x)}\Big(\Lambda_{m,k}\mid n<\zeta\Big)=\P_{(m,x)}\left[1_{\Lambda_{m,k}}\cdot \lambda^{m-k}\frac{h(k,X_k)}{h(m,x)}1_{\{k<\zeta\}}\right].
 \end{align}
\end{prop}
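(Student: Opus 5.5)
The plan is to apply the Markov property at time $k$ and then use the survival asymptotics (\ref{lifetime-limit}) in both the numerator and the denominator. Fix $k\ge m\ge N$ and $n>k$. Conditioning on $\sigma(X_m,\dots,X_k)$ gives
\begin{align*}
\P_{(m,x)}\Big(\Lambda_{m,k},\ n<\zeta\Big)=\P_{(m,x)}\Big[1_{\Lambda_{m,k}}1_{\{k<\zeta\}}\,\P_{(k,X_k)}(n<\zeta)\Big].
\end{align*}
We then divide this by $\P_{(m,x)}(n<\zeta)$ and let $n\to\infty$.

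First, record the cyclic-class bookkeeping. On $\{k<\zeta\}$ under $\P_{(m,x)}$ with $x\in E_r$, every step uses a transition with $K_j(X_j,X_{j+1})>0$. By (\ref{H1}) this means $K(X_j,X_{j+1})>0$, and hence $Q(X_j,X_{j+1})>0$. Therefore $X_k\in E_{r+k-m}$ almost surely on that event.

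Second, apply (\ref{lifetime-limit}) with starting time $k\ge N$ and starting point $y=X_k\in E_{r+k-m}$:
\begin{align*}
\P_{(k,y)}(n<\zeta)=\pi_{r+k-m+n-k}\left[\frac1R\right]\lambda^{n-k}n^\kappa h(k,y)\big(1+O(n^{-1})\big).
\end{align*}
Note that $\pi_{r+(k-m)+(n-k)}=\pi_{r+n-m}$, which is the same class-distribution that appears in the asymptotics of $\P_{(m,x)}(n<\zeta)$. Since $E$ is finite, there are only finitely many values of $y$, so the $O(n^{-1})$ term is uniform in $y$.

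Third, take the ratio. We have
\begin{align*}
\frac{\P_{(k,X_k)}(n<\zeta)}{\P_{(m,x)}(n<\zeta)}=\lambda^{m-k}\frac{h(k,X_k)}{h(m,x)}\big(1+O(n^{-1})\big),
\end{align*}
uniformly on $\{k<\zeta\}$. The factors $\pi_{r+n-m}[1/R]>0$ and $n^\kappa$ cancel, and the positivity of $\pi_{r+n-m}[1/R]$ (from $R>0$) justifies the division. The random variable inside the expectation is therefore bounded by a deterministic constant times $1_{\{k<\zeta\}}$, with $h$ bounded above and $h(m,x)>0$. Dominated convergence, which here reduces to a finite sum over paths, then yields (\ref{Conditioning}).

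The only delicate point I expect is the cancellation of the periodic factor $\pi_{r+n-m}[1/R]$. It does not converge as $n\to\infty$ when $d>1$, so the argument must make sure that numerator and denominator carry the \emph{same} index $r+n-m$. This is exactly why the first step is needed: it ensures that surviving paths sit in the correct cyclic class at time $k$. Everything else follows directly from the earlier proposition.
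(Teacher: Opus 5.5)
Your proof is correct and follows the paper's argument: apply the Markov property at time $k$, use (\ref{lifetime-limit}) in both numerator and denominator, and pass to the limit in a finite sum over $y\in E$. The paper leaves implicit that surviving paths satisfy $X_k\in E_{r+k-m}$; your explicit check of this ensures the non-convergent factor $\pi_{r+n-m}[1/R]$ carries the same index in both and cancels, which is exactly what is needed.
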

\begin{proof}
  By the Markov property and (\ref{lifetime-limit}), we have
  \begin{align*}
  \P_{(m,x)}\Big(\Lambda_{m,k}\mid n<\zeta\Big)&=\P_{(m,x)}\left[1_{\Lambda_{m,k}}1_{\{k<\zeta\}}\cdot \frac{\P_{(k,X_k)}(n<\zeta)}{\P_{(m,x)}(n<\zeta)}\right]\\
  &=\sum_{y\in E}\frac{\P_{(k,y)}(n<\zeta)}{\P_{(m,x)}(n<\zeta)}\cdot \P_{(m,x)}\Big[1_{\Lambda_{m,k}}1_{\{k<\zeta\}},\ X_k=y\Big]\\
  &\to \sum_{y\in E}\lambda^{m-k}\frac{h(k,y)}{h(m,x)}\cdot \P_{(m,x)}\Big[1_{\Lambda_{m,k}}1_{\{k<\zeta\}},\ X_k=y\Big]\\
  &=\P_{(m,x)}\left[1_{\Lambda_{m,k}}\cdot \lambda^{m-k}\frac{h(k,X_k)}{h(m,x)}1_{\{k<\zeta\}}\right].
\stepcounter{equation}\tag{\theequation}   
  \end{align*}
  This completes the proof.
\end{proof}

The preceding results were proved for $m\ge N$. To extend them to $0\le m<N$, define $h(m,x)$ by the backward recursion
\begin{align}
\label{h-def-2}
  h(m,x):=\lambda^{-1}\sum_{y\in E}K_m(x,y)h(m+1,y)\qquad \text{for}\ m=N-1,N-2,...,0.
\end{align}
Note that $0<h(m,x)<\infty$. The harmonicity relation (\ref{harmonic}) holds by construction. Applying the Markov property at time $N$ extends the survival probability asymptotics in (\ref{lifetime-limit}) to $0\le m<N$. The conditional limit (\ref{Conditioning}) then follows by the same argument as in Proposition \ref{prop2.7}. Consequently, for every $m\ge0$, the conditioned law admits the following h-transform representation:
\begin{align}
  \left.\frac{d\Q_{(m,x)}}{d\P_{(m,x)}}\right|_{\sigma(X_m,...,X_n)}=\lambda^{m-n}\frac{h(n,X_n)}{h(m,x)}1_{\{n<\zeta\}}\qquad \text{for}\ 0\le m\le n.
\end{align}
This is equivalent to (\ref{Qn-definition}).

This completes the proof of Theorem \ref{thm1.1}.

\subsection{Conditioned Process}
\label{S2.2}
In this subsection, we prove Theorem \ref{thm1.2}. We first show that the measures $\Q_{(m,x)}$ and $\mathbf{Q}_{(m,x)}$ are equivalent.

\begin{prop}
\label{prop2.8}
For $m\ge N$ and $x\in E$, we have
\begin{align}
\label{density}
  \frac{d\Q_{(m,x)}}{d\mathbf{Q}_{(m,x)}}=\frac{W_{m,\infty}}{\mathbf{Q}_{(m,x)}[W_{m,\infty}]}\qquad \text{on}\ \sigma(X_m,X_{m+1},...).
\end{align}
For $0\le m< N$, we define
\begin{align}
  W_{m,\infty}:=R_{m,N}W_{N,\infty}.
\end{align}
Then, (\ref{density}) also holds for $0\le m<N$. In particular, for $m\ge 0$ and $x\in E$, the measures $\Q_{(m,x)}$ and $\mathbf{Q}_{(m,x)}$ are equivalent.
\end{prop}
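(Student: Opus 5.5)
The plan is to prove (\ref{density}) on each finite-dimensional $\sigma$-field $\sigma(X_m,\ldots,X_n)$ and then pass to $\sigma(X_m,X_{m+1},\ldots)$ by uniqueness of measures agreeing on the $\pi$-system of cylinder events. First take $m\ge N$. Combining the $h$-transform representation of $\Q_{(m,x)}$ with respect to $\P_{(m,x)}$ with the change of measure (\ref{Plambda=Q}), for $\Lambda_{m,n}\in\sigma(X_m,\ldots,X_n)$ we get
\begin{align*}
\Q_{(m,x)}(\Lambda_{m,n})=\lambda^{m-n}\frac{1}{h(m,x)}\P_{(m,x)}\Big[1_{\Lambda_{m,n}}h(n,X_n)1_{\{n<\zeta\}}\Big]
=\frac{R(x)}{h(m,x)}\mathbf{Q}_{(m,x)}\left[1_{\Lambda_{m,n}}\frac{R_{m,n}h(n,X_n)}{R(X_n)}\right],
\end{align*}
where the second equality follows from (\ref{Plambda=Q}) applied to the function $h(n,\cdot)$ by decomposing into the events $\{X_n=y\}$.

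Next I would substitute the definition (\ref{h-def-1}). We have $h(n,X_n)/R(X_n)=e^{\kappa(\gamma-H_{n-1})}\mathbf{Q}_{(n,X_n)}[W_{n,\infty}]$ and $R(x)/h(m,x)=e^{-\kappa(\gamma-H_{m-1})}/\mathbf{Q}_{(m,x)}[W_{m,\infty}]$. The exponential prefactors combine to $\exp(-\kappa\sum_{k=m}^{n-1}k^{-1})$, which turns $R_{m,n}$ into $W_{m,n}$. Hence the density on $\sigma(X_m,\ldots,X_n)$ equals
\begin{align*}
W_{m,n}\frac{\mathbf{Q}_{(n,X_n)}[W_{n,\infty}]}{\mathbf{Q}_{(m,x)}[W_{m,\infty}]}.
\end{align*}
This is exactly (\ref{Qtilde-Q-mn}), the restriction of the measure $\widetilde{\mathbf{Q}}_{(m,x)}$ from (\ref{Qtilde-Q-infty}). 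By the Markov property, $\mathbf{Q}_{(m,x)}[W_{m,\infty}\mid X_m,\ldots,X_n]=W_{m,n}\mathbf{Q}_{(n,X_n)}[W_{n,\infty}]$, using $W_{m,\infty}=W_{m,n}W_{n,\infty}$ and the integrability from Lemma \ref{lem2.2}. So $\Q_{(m,x)}$ and $\widetilde{\mathbf{Q}}_{(m,x)}$ agree on all cylinders, and therefore on the full $\sigma$-field.

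For $0\le m<N$, I would condition at time $N$. For $n\ge N$, the density of $\Q_{(m,x)}$ with respect to $\mathbf{Q}_{(m,x)}$ on $\sigma(X_m,\ldots,X_n)$ is again $\frac{R(x)}{h(m,x)}\frac{R_{m,n}h(n,X_n)}{R(X_n)}\lambda^{0}$ by the same computation, since (\ref{Plambda=Q}) holds for all $m$. Factor $R_{m,n}=R_{m,N}R_{N,n}$ and apply the $m\ge N$ case from time $N$. The density becomes $\frac{R(x)}{h(m,x)}R_{m,N}\frac{h(N,X_N)}{R(X_N)}\cdot(\text{density of }\Q_{(N,X_N)})$. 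Letting $n\to\infty$, this identifies the density as $c\,R_{m,N}W_{N,\infty}$ for a constant $c$. The constant $c$ must equal $1/\mathbf{Q}_{(m,x)}[W_{m,\infty}]$, because both sides are probability densities; this is finite by Lemma \ref{lem2.2} and the boundedness of $R_{m,N}$.

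\textbf{Main obstacle.} The only delicate point is the passage from the finite-dimensional densities to the infinite-horizon density, i.e.\ the martingale and uniform-integrability step. This is handled by the $L^p$ bounds (\ref{QW-uniform}), which give $W_{m,n}\to W_{m,\infty}$ in $L^1$ and hence $W_{m,\infty}\in(0,\infty)$ with finite mean. Since $W_{m,\infty}>0$ almost surely, equivalence of the two measures follows immediately.
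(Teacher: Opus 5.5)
Your proposal is correct and follows the paper's proof. For $m\ge N$ you combine the $h$-transform density with (\ref{Plambda=Q}) and (\ref{h-def-1}) to recover (\ref{Qtilde-Q-mn}), and for $m<N$ you carry out, by conditioning at time $N$, the ``direct calculation'' the paper leaves implicit; the one point to make explicit is that $R_{m,N}>0$ $\mathbf{Q}_{(m,x)}$-a.s. by (\ref{H1}), which together with Lemma \ref{lem2.2} gives $W_{m,\infty}>0$ and hence equivalence also for $m<N$.
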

\begin{proof}
First, suppose that $m\ge N$. By (\ref{Plambda=Q}), for $n\ge m$ and $\Lambda_{m,n}\in\sigma(X_m,\ldots,X_n)$, we have
\begin{align*}
\Q_{(m,x)}(\Lambda_{m,n})&=\lambda^{n-m}R(x)\mathbf{Q}_{(m,x)}\left[1_{\Lambda_{m,n}}\cdot \frac{R_{m,n}}{R(X_n)}\cdot \lambda^{m-n}\frac{h(n,X_n)}{h(m,x)}\right]\\
&=\mathbf{Q}_{(m,x)}\left[1_{\Lambda_{m,n}}\cdot W_{m,n}\frac{\mathbf{Q}_{(n,X_n)}[W_{n,\infty}]}{\mathbf{Q}_{(m,x)}[W_{m,\infty}]}\right].
\stepcounter{equation}\tag{\theequation}
\end{align*}
This implies
\begin{align}
  \left.\frac{d\Q_{(m,x)}}{d\mathbf{Q}_{(m,x)}}\right|_{\sigma(X_m,...,X_n)}=W_{m,n}\frac{\mathbf{Q}_{(n,X_n)}[W_{n,\infty}]}{\mathbf{Q}_{(m,x)}[W_{m,\infty}]}.
\end{align}
Since this agrees with (\ref{Qtilde-Q-mn}) for every $n\ge m$, the measures $\mathbb Q_{(m,x)}$ and $\widetilde{\mathbf{Q}}_{(m,x)}$ coincide on $\sigma(X_m,X_{m+1},...)$. Thus, by (\ref{Qtilde-Q-infty}), the first assertion holds. 

In the case of $0\le m<N$, the assertion follows by a direct calculation. Finally, Lemma \ref{lem2.2} and (\ref{H1}) imply that $W_{m,\infty}>0$, $\mathbf{Q}_{(m,x)}$-a.s., for every $m\ge0$. Hence the two measures are equivalent. This completes the proof.
\end{proof}

For $f:E\to \R,$ we define
\begin{align}
  Tf(x)&:=\sum_{y\in E}Q(x,y)a(x,y)f(y),\\
  Uf(x)&:=\sum_{y\in E}Q(x,y)b(x,y)f(y).
\end{align}

We next refine the asymptotic estimate in (\ref{QW-asym}). 

\begin{lem}
For $x\in E$, we have
\begin{align}
\label{QW-asym-ex}
  \mathbf{Q}_{(m,x)}[W_{m,\infty}]=1+\frac{u(x)+\alpha}{m}+O(m^{-2}),
\end{align}
where 
\begin{align}
  \alpha:=\pi[Tu]+\pi[U1]-\frac{\kappa^2}{2}.
\end{align}
\end{lem}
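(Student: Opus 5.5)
The plan is to study $w_m(x):=\mathbf{Q}_{(m,x)}[W_{m,\infty}]$ through a one-step backward recursion. I will compare it with an explicit second-order ansatz, using a maximum-principle argument for a stochastic kernel. First, since $W_{m,\infty}=W_{m,m+1}W_{m+1,\infty}$ and $W_{m,m+1}=e^{-\kappa/m}K_m(X_m,X_{m+1})/K(X_m,X_{m+1})$, the Markov property under $\mathbf{Q}_{(m,x)}$ gives
\begin{align*}
w_m(x)=\sum_{y\in E}F_m(x,y)w_{m+1}(y)\qquad\text{for } m\ge N,
\end{align*}
with $F_m$ as in the proof of Lemma \ref{lem2.2}. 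This is the same computation as in Proposition \ref{lem2.3}. Moreover, (\ref{QW-asym}) gives $w_m=1+O(m^{-1})$ uniformly in $x$.

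Next, I will expand $F_m$ to second order. From (\ref{H2}) and $e^{-\kappa/m}=1-\kappa/m+\kappa^2/(2m^2)+O(m^{-3})$,
\begin{align*}
F_m(x,y)=Q(x,y)\left(1+\frac{a(x,y)-\kappa}{m}+\frac{b(x,y)-\kappa a(x,y)+\kappa^2/2}{m^2}\right)+O(m^{-3}).
\end{align*}
I will take the ansatz $\psi_m(x):=1+\frac{u(x)+\alpha}{m}+\frac{v(x)}{m^2}$, with $v$ to be determined, and expand $\psi_{m+1}$ using $\frac{1}{m+1}=\frac1m-\frac1{m^2}+O(m^{-3})$. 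The goal is $F_m\psi_{m+1}=\psi_m+O(m^{-3})$.

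At order $1/m$ the requirement is $g-\kappa+Qu=u$, which is exactly (\ref{Poisson-eq}); the $\alpha$ contributions cancel because $Q1=1$. At order $1/m^2$ the requirement becomes a Poisson equation
\begin{align*}
(I-Q)v=U1-\kappa g+\tfrac{\kappa^2}{2}+Tu-\kappa Qu+\alpha(g-\kappa)-Qu-\alpha .
\end{align*}
This is solvable if and only if the $\pi$-mean of the right side vanishes. Using $\pi Q=\pi$, $\pi[u]=0$, $\pi[g]=\kappa$ and $\pi[g-\kappa]=0$, that mean equals $\pi[U1]+\pi[Tu]-\kappa^2/2-\alpha$. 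This is zero precisely for the stated value of $\alpha$, so a solution $v$ exists. This bookkeeping is where I expect the main difficulty to lie: one must get every second-order cross term right, in particular $-\kappa a$ from the product of exponential and kernel, and the $-(u+\alpha)/m^2$ shift coming from $1/(m+1)$.

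Finally, enlarging $N$ if needed so that $\psi_m>0$, I will write $F_m\psi_{m+1}(x)=\psi_m(x)(1+\varepsilon_m(x))$ with $|\varepsilon_m|\le C m^{-3}$. I will also set $G'_m(x,y):=F_m(x,y)\psi_{m+1}(y)/F_m\psi_{m+1}(x)$, which is a stochastic kernel, and $D_m:=w_m/\psi_m$. The recursion then becomes
\begin{align*}
D_m(x)=(1+\varepsilon_m(x))\sum_{y}G'_m(x,y)D_{m+1}(y).
\end{align*}
Iterating from $m$ to $n$ gives
\begin{align*}
\prod_{k=m}^{n-1}(1-Ck^{-3})\,\min D_n\le D_m(x)\le \prod_{k=m}^{n-1}(1+Ck^{-3})\,\max D_n .
\end{align*}
Since $w_n\to1$ and $\psi_n\to1$ uniformly, $D_n\to1$. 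Letting $n\to\infty$ therefore yields $D_m=1+O(m^{-2})$, so $w_m=\psi_m(1+O(m^{-2}))=1+\frac{u(x)+\alpha}{m}+O(m^{-2})$, which is (\ref{QW-asym-ex}).
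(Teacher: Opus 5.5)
Your proof is correct and follows the paper's argument. You use the same one-step recursion $w_m=F_mw_{m+1}$ and the same second-order ansatz $\psi_m$, with the Poisson-equation solvability condition forcing the constant to be $\alpha$; your order-$m^{-2}$ right-hand side agrees term by term with the paper's $-QA+D_1A+D_21$. The only difference is the closing estimate: you iterate the ratio $w_m/\psi_m$ through the normalized stochastic kernel $G'_m$, a maximum-principle bound that needs only $w_n\to1$ from (\ref{QW-asym}), whereas the paper iterates the additive error $\delta_m=w_m-\psi_m$ and controls it with the moment bound (\ref{QW-uniform}); both give $O\bigl(\sum_{k\ge m}k^{-3}\bigr)=O(m^{-2})$.
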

\begin{proof}
We refine the approximate harmonicity estimate in (\ref{Fnphi-n+1}). By (\ref{H2}), a second-order expansion gives
  \begin{align}
    F_m=Q+\frac{D_1}{m}+\frac{D_2}{m^2}+O(m^{-3}),
  \end{align}
  where 
  \begin{align}
    D_1:=T-\kappa Q,\qquad D_2:=U-\kappa T+\frac{\kappa^2}{2}Q.
  \end{align}
  We define
  \begin{align}
    \psi_m(x):=1+\frac{A(x)}{m}+\frac{B(x)}{m^2},
  \end{align}
where $A,B:E\to\mathbb R$ are functions to be chosen. We have
  \begin{align*}
    &F_m\psi_{m+1}(x)-\psi_m(x)\\
    &\qquad =\frac{QA(x)+D_11(x)-A(x)}{m}+\frac{QB(x)-QA(x)+D_1A(x)+D_21(x)-B(x)}{m^2}+O(m^{-3}).
    \stepcounter{equation}\tag{\theequation}
  \end{align*}
Thus, for $F_m\psi_{m+1}-\psi_m=O(m^{-3})$ to hold, it suffices that $A$ and $B$ satisfy the following conditions:
\begin{align}
\label{equations1}
  QA+D_11-A=0&\qquad \Longleftrightarrow\qquad (I-Q)A=g-\kappa,\\
  \label{equations2}
  QB-QA+D_1A+D_21-B=0&\qquad \Longleftrightarrow\qquad (I-Q)B=-QA+D_1A+D_21.
\end{align}
By (\ref{Poisson-eq}), equation (\ref{equations1}) is satisfied by $A=u+c$, where $c$ is a constant to be determined. Since $Q$ is irreducible, the Poisson equation (\ref{equations2}) is solvable if and only if its right-hand side has zero $\pi$-mean. For $A=u+c$, this condition reads 
\begin{align}
  0=\pi\Big[-QA+D_1A+D_21\Big]=-c+ \pi[Tu]+\pi[U1]-\frac{\kappa^2}{2}.
\end{align}
Thus, choosing $c=\alpha$ ensures that equation (\ref{equations2}) has a solution. Therefore, for such $\psi_m$, we have
\begin{align}
  \epsilon_m:=F_m\psi_{m+1}-\psi_m=O(m^{-3}).
\end{align}
For $n>m\ge N$, we have
\begin{align*}
 \delta_m(x):&=\mathbf{Q}_{(m,x)}[W_{m,\infty}]-\psi_m(x)\\
 &=e^{-\frac{\kappa}{m}}\sum_{y\in E}Q(x,y)\frac{K_m(x,y)}{K(x,y)}\mathbf{Q}_{(m+1,y)}[W_{m+1,\infty}]-F_m\psi_{m+1}(x)+F_m\psi_{m+1}(x)-\psi_m(x)\\
 &=F_m\delta_{m+1}(x)+\epsilon_m(x)\\
 &=F_m\cdots F_{n-1}\delta_{n}(x)+\sum_{k=m}^{n-2} F_m\cdots F_k\epsilon_{k+1}(x)+\epsilon_m(x)\\
 &=\mathbf{Q}_{(m,x)}\Big[W_{m,n}\delta_n(X_n)\Big]+\sum_{k=m}^{n-1}\mathbf{Q}_{(m,x)}\Big[W_{m,k}\epsilon_k(X_k)\Big].
\stepcounter{equation}\tag{\theequation}
\end{align*}
By (\ref{QW-asym}), we have $\delta_m=O(m^{-1})$. Thus, by (\ref{QW-uniform}), there exist constants $C_{11},C_{12}<\infty$ such that
\begin{align*}
  \|\delta_m\|_\infty\le \left(\sup_{\substack{j\ge m\\x\in E}}\mathbf{Q}_{(m,x)}[W_{m,j}]\right)\left(\|\delta_n\|_\infty+\sum_{k=m}^{n-1}\|\epsilon_k\|_\infty\right)\le\frac{C_{11}}{n}+C_{12}\sum_{k=m}^{n-1}\frac{1}{k^3}.
\stepcounter{equation}\tag{\theequation}
\end{align*}
Letting $n\to \infty,$ we have
\begin{align}
  \|\delta_m\|_\infty \le C_{12}\sum_{k=m}^\infty \frac{1}{k^3}=O(m^{-2}).
\end{align}
This implies the assertion.
\end{proof}

We now prove (\ref{thm1.2-eq1}).

\begin{prop}
  For $x,y\in E$, we have
  \begin{align}
  \label{Qn=Q+O}
    Q_n(x,y)=Q(x,y)\left(1+\frac{a(x,y)-\kappa+u(y)-u(x)}{n}\right)+O(n^{-2}).
  \end{align}
\end{prop}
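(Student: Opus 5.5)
We start from the definition (\ref{Qn-definition}) and write $h$ through (\ref{h-def-1}). Since $n\to\infty$, we may assume $n\ge N$. Then
\begin{align*}
Q_n(x,y)=\lambda^{-1}\frac{e^{\kappa(\gamma-H_n)}R(y)\mathbf{Q}_{(n+1,y)}[W_{n+1,\infty}]}{e^{\kappa(\gamma-H_{n-1})}R(x)\mathbf{Q}_{(n,x)}[W_{n,\infty}]}K_n(x,y)
=e^{-\frac{\kappa}{n}}Q(x,y)\frac{K_n(x,y)}{K(x,y)}\cdot\frac{\mathbf{Q}_{(n+1,y)}[W_{n+1,\infty}]}{\mathbf{Q}_{(n,x)}[W_{n,\infty}]}.
\end{align*}
The second equality uses the definition of $Q$. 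This is exactly $\widetilde{Q}_n(x,y)$ from (\ref{tildeQ-trans}), consistent with Proposition \ref{prop2.8}. When $K(x,y)=0$, both sides of (\ref{Qn=Q+O}) vanish by (\ref{H1}), so we only need to treat pairs with $K(x,y)>0$.

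Next we expand each factor to first order with an $O(n^{-2})$ remainder, uniformly in $x,y$:
\begin{itemize}
\item $e^{-\kappa/n}=1-\kappa/n+O(n^{-2})$;
\item by (\ref{H2}), $K_n(x,y)/K(x,y)=1+a(x,y)/n+O(n^{-2})$;
\item by the refined estimate (\ref{QW-asym-ex}), $\mathbf{Q}_{(n+1,y)}[W_{n+1,\infty}]=1+(u(y)+\alpha)/(n+1)+O(n^{-2})=1+(u(y)+\alpha)/n+O(n^{-2})$, since $1/(n+1)-1/n=O(n^{-2})$;
\item similarly $\mathbf{Q}_{(n,x)}[W_{n,\infty}]^{-1}=1-(u(x)+\alpha)/n+O(n^{-2})$. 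The inverse is legitimate because this quantity tends to $1$ uniformly in $x$, since $E$ is finite.
\end{itemize}

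Multiplying the four factors, the constant $\alpha$ cancels between numerator and denominator, and all cross terms are $O(n^{-2})$ because $a$ and $u$ are bounded. We obtain
$$
Q_n(x,y)=Q(x,y)\Big(1+\frac{a(x,y)-\kappa+u(y)-u(x)}{n}\Big)+O(n^{-2}),
$$
where $Q(x,y)\le1$ keeps the remainder uniform.

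The only substantive input is (\ref{QW-asym-ex}). The cruder estimate (\ref{QW-asym}) would only give $O(n^{-1})$, which is not enough. With (\ref{QW-asym-ex}) available, the proof is routine bookkeeping of the $O(n^{-2})$ terms and their uniformity.
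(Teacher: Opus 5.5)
Your proposal is correct and follows the paper's proof. The paper likewise writes $Q_n$ in the form (\ref{tildeQ-trans}) and substitutes $e^{-\kappa/n}=1-\kappa/n+O(n^{-2})$, (\ref{H2}), and the ratio $\mathbf{Q}_{(n+1,y)}[W_{n+1,\infty}]/\mathbf{Q}_{(n,x)}[W_{n,\infty}]=1+(u(y)-u(x))/n+O(n^{-2})$ from (\ref{QW-asym-ex}), with $\alpha$ cancelling. The only cosmetic differences are that you check $Q_n=\widetilde{Q}_n$ directly from (\ref{Qn-definition}) and (\ref{h-def-1}), rather than via the identification in Proposition \ref{prop2.8}, and that you handle the pairs with $K(x,y)=0$ explicitly.
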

\begin{proof}
    By (\ref{QW-asym-ex}), we have
  \begin{align}
    \frac{\mathbf{Q}_{(n+1,y)}[W_{n+1,\infty}]}{\mathbf{Q}_{(n,x)}[W_{n,\infty}]}=1+\frac{u(y)-u(x)}{n}+O(n^{-2}).
  \end{align}
  Moreover, we have
  \begin{align}
    e^{-\frac{\kappa}{n}}=1-\frac{\kappa}{n}+O(n^{-2}).
  \end{align}
Therefore, substituting these and  (\ref{H2}) into equation (\ref{tildeQ-trans}), we obtain the desired assertion.
\end{proof}

We now prove (\ref{thm1.2-eq1'}).

\begin{cor}
\label{cor2.10}
The following statements are equivalent:
\begin{enumerate}
  \item $Q_n=Q+O(n^{-2})$.
  \item Condition (\ref{C3}) holds.
\end{enumerate}
\end{cor}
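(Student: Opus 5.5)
By Proposition (\ref{Qn=Q+O}), the first-order term of $Q_n-Q$ is $Q(x,y)\,c(x,y)/n$ with
\begin{align*}
c(x,y):=a(x,y)-\kappa+u(y)-u(x).
\end{align*}
Hence statement 1 holds if and only if $c(x,y)=0$ for every pair with $Q(x,y)>0$, equivalently with $K(x,y)>0$, since $Q$ and $K$ have the same support. The plan is therefore to show that this vanishing is equivalent to (\ref{C3}).

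For 1 $\Rightarrow$ 2, the vanishing of $c$ gives directly $a(x,y)=\kappa+V(y)-V(x)$ with $V:=-u$, which is exactly (\ref{C3}).

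For 2 $\Rightarrow$ 1, assume (\ref{C3}) holds with some $V$.
\begin{itemize}
\item First compute $g(x)=\sum_y Q(x,y)\bigl(\kappa+V(y)-V(x)\bigr)=\kappa+QV(x)-V(x)$, so $g-\kappa=-(I-Q)V$.
\item Then $-V+\pi[V]$ solves the Poisson equation (\ref{Poisson-eq}). It has zero $\pi$-mean and $(I-Q)(-V+\pi[V])=g-\kappa$, since $Q$ is stochastic and annihilates constants in $I-Q$.
\item By uniqueness of the solution of (\ref{Poisson-eq}), which holds because $Q$ is irreducible, $u=-V+\pi[V]$. Thus $u(y)-u(x)=V(x)-V(y)$.
\item Consequently $c(x,y)=\kappa+V(y)-V(x)-\kappa+V(x)-V(y)=0$ on the support, and (\ref{Qn=Q+O}) gives $Q_n=Q+O(n^{-2})$.
\end{itemize}

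There is no real obstacle. The only points to watch are the support issue, since $c$ only needs to vanish where $Q(x,y)>0$, and the invocation of uniqueness for the Poisson equation.
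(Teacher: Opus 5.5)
Your proposal is correct and follows essentially the same argument as the paper: for one direction you read off (\ref{C3}) with $V=-u$ from the vanishing of the first-order term in (\ref{Qn=Q+O}), and for the converse you show $(I-Q)V=\kappa-g$ and use uniqueness for the Poisson equation (\ref{Poisson-eq}) to get $u=-V+\pi[V]$. Your remark that $Q$ and $K$ have the same support, so the coefficient only needs to vanish where $K(x,y)>0$, makes explicit a point the paper leaves implicit.
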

\begin{proof}
By (\ref{Qn=Q+O}), assertion 1 implies assertion 2 with $V=-u$. Conversely, suppose that assertion 2 holds. Then
\begin{align}
  (I-Q)V(x)=\sum_{y\in E}Q(x,y)\Big(V(x)-V(y)\Big)=\kappa-g(x).
\end{align}
Therefore, by the uniqueness of the solution to the Poisson equation (\ref{Poisson-eq}), we have
\begin{align}
  u(x)=-V(x)+\pi[V].
\end{align}
Substituting this into (\ref{Qn=Q+O}) yields assertion 1. This completes the proof.
\end{proof}

We now prove (\ref{thm1.2-eq2}) and (\ref{thm1.2-eq2'}).

\begin{prop}
For $r=0,...,d-1$, $x\in E_r$, and $m\ge 0$, we have
\begin{align}
\Big\|\Q_{(m,x)}(X_n\in \cdot)-\pi_{r+n-m}\Big\|_{\mathrm{TV}}=O(n^{-1}).
\end{align}
Moreover, under condition (\ref{C3}), we have
\begin{align}
\Big\|\Q_{(m,x)}(X_n\in \cdot)-\pi_{r+n-m}\Big\|_{\mathrm{TV}}=O(n^{-2}).
\end{align}
\end{prop}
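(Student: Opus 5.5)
The plan is to reuse the telescoping decomposition (\ref{mun-pi-expand}) from the proof of (\ref{lifetime-limit}), now with $\mu_n:=\Q_{(m,x)}(X_n\in\cdot)$ and with the sharper one-step estimates available from Proposition \ref{prop2.8} and (\ref{Qn=Q+O}). First I reduce to $m\ge N$. For $0\le m<N$, the Markov property at time $N$ gives $\mu_n=\sum_{z}\Q_{(m,x)}(X_N=z)\,\Q_{(N,z)}(X_n\in\cdot)$. Moreover, under $\Q_{(m,x)}$ we have $X_N\in E_{r+N-m}$ almost surely, because $Q_k$ has the same support as $K$, and hence as $Q$, by (\ref{H1}). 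So a bound for starting time $N$ that is uniform over the starting state transfers directly, with the cyclic index matching.

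For $m\ge N$, Proposition \ref{prop2.8} shows $\Q_{(m,x)}=\widetilde{\mathbf Q}_{(m,x)}$, so the first assertion is exactly (\ref{mun-pi}), which was already proved. For the sharpened assertion under (\ref{C3}), I would start from the same identity
\begin{align*}
\mu_n-\pi_{r+n-m}=(\mu_j-\pi_{r+j-m})Q^{n-j}+\sum_{k=j}^{n-1}(\mu_{k+1}-\mu_kQ)Q^{n-1-k},\qquad j=[n/2].
\end{align*}
Here $\mu_{k+1}-\mu_kQ=\mu_k(Q_k-Q)$, and Corollary \ref{cor2.10} gives $\|Q_k(y,\cdot)-Q(y,\cdot)\|_{\mathrm{TV}}=O(k^{-2})$ uniformly in $y$. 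Two facts are needed to bound each term. The measure $\mu_k(Q_k-Q)$ has total mass zero. It is also supported in a single cyclic class $E_{r+k+1-m}$, since $\mu_k$ lives on $E_{r+k-m}$ and both kernels map $E_s$ into $E_{s+1}$. Therefore, exactly as in (\ref{mu-muQ}), I can subtract $\pi_{r+n-m}(A)$ inside and apply (\ref{Qpi-speed}). This bounds each summand by $C\eta^{n-1-k}k^{-2}$. The first term is at most $2C_7\eta^{n-j}$, as before. Summing over $k\ge j$ gives $O(j^{-2})+O(\eta^{n/2})=O(n^{-2})$.

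The only point that needs care is the cyclic-class bookkeeping that justifies subtracting $\pi_{r+n-m}$. Specifically, $Q^{n-1-k}(y,A)-\pi_{r+n-m}(A)$ must be controlled by (\ref{Qpi-speed}) for all $y$ in the support, so the class of $y$ advanced by $n-1-k$ steps must equal $r+n-m$. This follows from the support statement above. Beyond that, the argument is routine, and uniformity of the constants in $x$ and $y$ comes from finiteness of $E$.
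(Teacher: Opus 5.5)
Your proposal is correct and follows the paper's proof essentially step for step. For $m\ge N$ you identify $\Q_{(m,x)}$ with $\widetilde{\mathbf{Q}}_{(m,x)}$ via Proposition \ref{prop2.8}, so the first bound is (\ref{mun-pi}), and under (\ref{C3}) you feed the $O(k^{-2})$ one-step estimate of Corollary \ref{cor2.10} into (\ref{mu-muQ}). The case $0\le m<N$ is reduced, as in the paper, by conditioning on $X_N\in E_{r+N-m}$, and your explicit zero-mass and cyclic-class bookkeeping fills in details the paper leaves implicit.
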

\begin{proof}
First suppose that $m\ge N$. The first assertion follows from (\ref{mun-pi}). For the second assertion, Corollary \ref{cor2.10} allows us to refine the estimate in (\ref{mu-muQ}) to order $k^{-2}$. Applying this refined estimate to (\ref{mun-pi}) then yields the second assertion.

For $0\le m<N$, both assertions follow by conditioning on $X_N$, since $X_N\in E_{r+N-m}$ almost surely under $\Q_{(m,x)}$. This completes the proof.
\end{proof}

We now prove (\ref{thm1.2-eq3}).

\begin{prop}
For $x\in E,\ m\ge 0,$ and $f:E\to \R$, we have
\begin{align}
  \frac{1}{n}\sum_{k=m}^{n-1}f(X_k)\to \pi[f]\qquad \Q_{(m,x)}\text{-a.s.}
\end{align}
In particular, for $z\in E$, we have
\begin{align}
  \frac{1}{n}\#\Big\{m\le k<n:\ X_k=z\Big\}\to \pi(z)\qquad \Q_{(m,x)}\text{-a.s.}
\end{align}
\end{prop}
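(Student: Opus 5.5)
The quickest route is through the equivalence of measures in Proposition \ref{prop2.8}. For every $m\ge0$ and $x\in E$, $\Q_{(m,x)}$ and $\mathbf{Q}_{(m,x)}$ are equivalent on $\sigma(X_m,X_{m+1},\dots)$. The event
\begin{align*}
  \Big\{\frac{1}{n}\sum_{k=m}^{n-1}f(X_k)\to\pi[f]\Big\}
\end{align*}
lies in this $\sigma$-field. It therefore suffices to show that this event has full $\mathbf{Q}_{(m,x)}$-probability; equivalence then transfers the almost sure statement to $\Q_{(m,x)}$.

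Under $\mathbf{Q}_{(m,x)}$ the process $(X_k)_{k\ge m}$ is a time-homogeneous Markov chain with the irreducible stochastic kernel $Q$ on the finite set $E$. Its unique invariant distribution is $\pi=LR$. The ergodic theorem for irreducible positive recurrent Markov chains gives
\begin{align*}
  \frac{1}{n-m}\sum_{k=m}^{n-1}f(X_k)\to\pi[f]\qquad\mathbf{Q}_{(m,x)}\text{-a.s.}
\end{align*}
This theorem needs no aperiodicity; if one wants to avoid citing it, it can be derived by decomposing the path into i.i.d.\ excursions between successive returns to $x$ and using $\pi(y)=\mathbf{Q}[\#\text{visits to }y\text{ per excursion}]/\mathbf{Q}[\text{excursion length}]$. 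Since $m$ is fixed and $(n-m)/n\to1$, the normalization by $n$ instead of $n-m$ does not change the limit. The second claim is the special case $f=1_{\{z\}}$.

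No step is a real obstacle. The one point needing care is measurability: the density relation (\ref{density}) holds on the full tail-inclusive $\sigma$-field $\sigma(X_m,X_{m+1},\dots)$, not only on finite-horizon $\sigma$-fields. This is exactly what Proposition \ref{prop2.8} provides, using $W_{m,\infty}\in(0,\infty)$ a.s. for $m\ge N$ and the extension $W_{m,\infty}=R_{m,N}W_{N,\infty}$ for $m<N$. So $\mathbf{Q}_{(m,x)}$-null sets are $\Q_{(m,x)}$-null for every $m\ge0$, and the proof reduces to quoting the ergodic theorem.
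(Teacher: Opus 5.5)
Your proposal is correct and follows the paper's proof: apply the ergodic theorem for the irreducible finite-state chain $Q$ under $\mathbf{Q}_{(m,x)}$, then transfer the almost sure convergence to $\Q_{(m,x)}$ via the equivalence in Proposition \ref{prop2.8}. The transfer only needs $\Q_{(m,x)}\ll\mathbf{Q}_{(m,x)}$ on $\sigma(X_m,X_{m+1},\dots)$, and your remark on normalizing by $n$ rather than $n-m$ is a harmless detail the paper leaves implicit.
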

\begin{proof}
Under $\mathbf{Q}_{(m,x)}$, the process $(X_k)_{k\ge m}$ is an irreducible time-homogeneous Markov chain on the finite state space $E$ with invariant distribution $\pi$. Hence, the ergodic theorem (see, e.g., Example 20.17 of \cite{Klenke}) yields
 \begin{align}
  \frac{1}{n}\sum_{k=m}^{n-1}f(X_k)\to \pi[f]\qquad \mathbf{Q}_{(m,x)}\text{-a.s.}
\end{align}
The assertion now follows from the equivalence of $\Q_{(m,x)}$ and $\mathbf{Q}_{(m,x)}$ established in Proposition \ref{prop2.8}. This completes the proof.
\end{proof}

This completes the proof of Theorem \ref{thm1.2}.


\section{Examples}
\label{S3}
\subsection{Elephant Random Walk Conditioned to Stay in an Interval}
\label{S3.1}
The elephant random walk (ERW), introduced by Sch\"{u}tz and Trimper \cite{Schutz-Trimper}, is a random walk with memory. Let
\begin{align}
  X_0:=0,\qquad X_n:=\xi_1+\cdots +\xi_n.
\end{align}
The process $(X_n)_{n\ge 0}$ is called an ERW if its increments $(\xi_n)_{n\ge1}$ are generated as follows:
\begin{align}
  \xi_1&=\begin{cases}
    +1&\text{with probability}\ q,\\
    -1&\text{with probability}\ 1-q,\\
  \end{cases}
 \\
 \label{ERW-eq1}
 \xi_{n+1}&=\begin{cases}
    +\xi_{U_n}&\text{with probability}\ p,\\
    -\xi_{U_n}&\text{with probability}\ 1-p,\\
  \end{cases}
\end{align}
where $p,q\in (0,1)$ and $U_n$ is uniformly distributed on $\{1,...,n\}$. The variables $(U_n)_{n\ge1}$, the Bernoulli variables determining the choices in (\ref{ERW-eq1}), and the initial increment $\xi_1$ are mutually independent. It is known that the ERW is a time-inhomogeneous Markov chain on $\Z$. Its transition kernel is given by
\begin{align}
  K_n^{\mathrm{ERW}}(x,x\pm 1)=\frac{1}{2}\pm \left(p-\frac{1}{2}\right)\frac{x}{n}\qquad \text{for}\ n\ge 1,\ x\in\{-n,-n+2,...,n\}.
\end{align}
For $c,d\in\Z_{>0}$, let $I=[-c,d]\cap \Z$, and let $(K_n^{I,\mathrm{ERW}})_{n\ge0}$ denote the transition kernels of the ERW killed upon exiting $I$. Let $K^{I,\mathrm{SRW}}$ denote the transition kernel of the symmetric simple random walk (SRW) killed upon exiting the same interval $I$. Let $A_n$ be the set defined in (\ref{An-def}) for the ERW killed upon exiting $I$ and started from the origin. Then,
\begin{align}
  \label{ERW-eq3}
  \frac{{K}_n^{I,\mathrm{ERW}}(x,x\pm 1)}{K^{I,\mathrm{SRW}}(x,x\pm 1)}=1\pm \frac{(2p-1)x}{n}\qquad \text{for}\ x\in A_n,\ n\ge N.
\end{align}
To apply our results, we complete the rows corresponding to unreachable states as in (\ref{tildeK}). Taking $N=c\vee d$, $a(x,x\pm1)=\pm(2p-1)x$, and $b=0$, we obtain a sequence of transition kernels $(\widetilde{K}_n^{I,\mathrm{ERW}})_{n\ge0}$ satisfying (\ref{H1}) and (\ref{H2}).

Debs \cite{Debs-conditioning} showed that the eigenvalue $\lambda^{I,\mathrm{SRW}}$ and corresponding right eigenvector $R^{I,\mathrm{SRW}}$ are given by
\begin{align}
  \lambda^{I,\mathrm{SRW}}&=\cos \left(\frac{\pi}{c+d+2}\right),\\
  R^{I,\mathrm{SRW}}(x)&=\sin \left(\frac{(x+c+1)\pi}{c+d+2}\right)\qquad \text{for}\ x\in I.
\end{align}
By a straightforward calculation, the transition kernel $Q^{I,\mathrm{SRW}}$ of the h-transform with respect to $R^{I,\mathrm{SRW}}$ and its invariant distribution $\pi^{I,\mathrm{SRW}}$ are given by
\begin{align}
  Q^{I,\mathrm{SRW}}(x,x\pm 1)&=\frac{1}{2}\left\{1\mp \tan \left(\frac{\pi}{c+d+2}\right)\tan \left(\frac{\pi}{c+d+2}\left(x-\frac{d-c}{2}\right)\right)\right\},\\
  \pi^{I,\mathrm{SRW}}(x)&=\frac{2}{c+d+2}\sin^2\left(\frac{(x+c+1)\pi}{c+d+2}\right).
\end{align}
Moreover, condition (\ref{C3}) holds with
\begin{align}
  V(x)=\frac{2p-1}{2}x^2,
\qquad
\kappa=\frac{1-2p}{2}.
\end{align}
Thus, Theorems \ref{thm1.1} and \ref{thm1.2} apply to the completed kernels $(\widetilde{K}_n^{I,\mathrm{ERW}})$. By Remark \ref{Rem1.1}, the resulting conclusions also hold for the original ERW started from the origin. Since the process with transition kernels $(K_n^{I,\text{ERW}})$ is killed upon exiting the interval $I$, its lifetime coincides with the first exit time $\sigma_I$ of the original ERW from $I$. Therefore, in the case $m=0$ and $x=0$, (\ref{thm1.1-eq2}) takes the following form:
\begin{align*}
\label{ERW-eq2}
  \P_{(0,0)}^{\text{ERW}}(n<\sigma_I)&=\frac{4}{c+d+2}
\sum_{y\in E_{n\bmod 2}}
\sin \left(\frac{(y+c+1)\pi}{c+d+2}\right)\\
&\qquad \times\left(\cos\frac{\pi}{c+d+2}\right)^n
n^{\frac{1-2p}{2}}
h^{I,\text{ERW}}(0,0)
\Big(1+O(n^{-1})\Big).
\stepcounter{equation}\tag{\theequation}
\end{align*}
The Markov chain with transition kernels defined by (\ref{Qn-definition}) can therefore be interpreted as the ERW conditioned to stay in the interval $I$.

\begin{Rem}
For a fixed interval $I$, (\ref{ERW-eq2}) gives the precise asymptotic behavior of $\mathbb{P}_{(0,0)}^{\mathrm{ERW}}(n<\sigma_I)$ as $n\to\infty$. Related studies provide exit-time tail bounds that are uniform in the interval size or investigate asymptotics for expanding intervals; see \cite{AZ,CGP,Qin}.
\end{Rem}

\subsection{A Generalized P\'{o}lya Urn Conditioned to Have Bounded Differences in Color Counts}
Consider an urn containing balls of $r\ge2$ colors. At each time step, one ball is added to the urn, and its color is chosen according to probabilities determined by the current color counts. For $i=1,...,r$ and $n\ge 0$, let $U_{n,i}$ denote the number of balls of color $i$ in the urn at time $n$. Fix $\alpha\ge0$ and an initial state $u_0=(u_{0,1},\ldots,u_{0,r})\in\Z_{>0}^r$, and set $U_n:=(U_{n,1},\ldots,U_{n,r})$. Given the urn composition at time $n$, the next ball has color $i$ with conditional probability
\begin{align}
  \P_{u_0}\Big(U_{n+1}=U_n+{e}_i\mid \sigma(U_0,...,U_n)\Big)=\frac{U_{n,i}^\alpha}{U_{n,1}^\alpha+\cdots +U_{n,r}^\alpha},
\end{align}
where ${e}_i$ denotes the $i$-th unit vector. When $\alpha=0$, each color is chosen with probability $\frac{1}{r}$. When $\alpha=1$, the model reduces to the classical P\'{o}lya urn. For the classical P\'{o}lya urn, see, for example, Section 4.3.2 of \cite{Durrett}. We define
\begin{align}
  X_n:=\Big(U_{n,1}-U_{n,r},...,U_{n,r-1}-U_{n,r},0\Big)\in \Z^{r-1}\times \{0\}.
\end{align}
Since the total number of balls at time $n$ is $u_{0,1}+\cdots+u_{0,r}+n$, the vector $U_n$ is uniquely determined by $X_n$ and $n$. Consequently, $(X_n)_{n\ge0}$ is a time-inhomogeneous Markov chain on $\mathbb Z^{r-1}\times\{0\}$ with initial state $x_0:=(u_{0,1}-u_{0,r},\ldots,u_{0,r-1}-u_{0,r},0)$. Let
\begin{align}
  s_0&:=u_{0,1}+\cdots +u_{0,r},\\
  z_i(x)&:=x_i-\frac{x_1+\cdots +x_r}{r}\qquad \text{for}\ i=1,...,r,\ x\in \Z^r.
\end{align}
Since 
\begin{align}
\label{Pol-eq2}
  U_{n,i}=\frac{n+s_0}{r}+z_i(X_n),
\end{align}
for each state $x$ reachable at time $n$, the transition probabilities of $(X_n)_{n\ge0}$ are given by
\begin{align}
  K_n^{\text{P\'{o}l}}(x,x+v_i)=\frac{\displaystyle \left(\frac{n+s_0}{r}+z_i(x)\right)^\alpha}{\displaystyle \sum_{j=1}^r\left(\frac{n+s_0}{r}+z_j(x)\right)^\alpha}
\end{align}
for $i=1,...,r$, where $v_1:=e_1,...,\ v_{r-1}:=e_{r-1},\ v_r:=(-1,...,-1,0).$
For $M\ge 1$, we define
\begin{align}
  E_M:=\Big\{x\in \Z^{r-1}\times \{0\}:\ \max_{1\le i\le r}x_i-\min_{1\le i\le r}x_i\le M\Big\}.
\end{align}
Let $(K_n^{E_M,\text{P\'{o}l}})_{n\ge 0}$ denote the transition kernels of $(X_n)$ killed upon exiting $E_M.$ Let $K^{E_M,\mathrm{Unif}}$ denote the corresponding killed transition kernel for $\alpha=0$, given by
\begin{align}
K^{E_M,\text{Unif}}(x,x+v_i)=\frac{1}{r}\qquad \text{for}\ i=1,...,r,
\end{align}
whenever $x+v_i\in E_M.$ Let $A_n$ be the set of reachable states defined in (\ref{An-def}) for this killed chain started from $x_0$ at time $0$. By a straightforward calculation, we have
\begin{align*}
\label{Pol-eq1}
\frac{{K}_n^{E_M,\text{P\'{o}l}}(x,x+v_i)}{K^{E_M,\text{Unif}}(x,x+v_i)}&=\frac{\displaystyle \left(1+\frac{rz_i(x)}{n+s_0}\right)^\alpha}{\displaystyle \frac{1}{r}\sum_{j=1}^r\left(1+\frac{rz_j(x)}{n+s_0}\right)^\alpha}\\
&=1+\frac{\alpha rz_i(x)}{n+s_0}+\frac{\alpha(\alpha-1)r^2}{2(n+s_0)^2}\left(z_i(x)^2-\frac{1}{r}\sum_{j=1}^rz_j(x)^2\right)+O(n^{-3})
\stepcounter{equation}\tag{\theequation}
\end{align*}
for $x\in A_n$ and $i=1,...,r$ such that $x+v_i\in E_M$. By (\ref{Pol-eq1}), the coefficients in (\ref{H2}) are given by
\begin{align}
  a(x,x+v_i)&=\alpha rz_i(x),\\
  b(x,x+v_i)&=\frac{\alpha(\alpha-1)r^2}{2}\left(z_i(x)^2-\frac{1}{r}\sum_{j=1}^rz_j(x)^2\right)-s_0\alpha r z_i(x).
\end{align}
For sufficiently large $N$, the completed transition kernels $(\widetilde{K}_n^{E_M,\text{P\'{o}l}})$ defined by (\ref{tildeK}) satisfy assumptions (\ref{H1}) and (\ref{H2}). Moreover, a direct calculation shows that condition (\ref{C3}) holds with
\begin{align}
  V(x)=\frac{\alpha r}{2}\sum_{j=1}^r z_j(x)^2,\qquad \kappa=-\frac{\alpha(r-1)}{2}.
\end{align}
Thus, Theorems \ref{thm1.1} and \ref{thm1.2} apply to the completed kernels $(\widetilde{K}_n^{E_M,\text{P\'{o}l}})$. By Remark \ref{Rem1.1},
these conclusions also hold for the original killed chain started from $x_0$ at time $0$. The Markov chain with transition kernels defined by (\ref{Qn-definition}), started from $x_0$, can therefore be interpreted as the difference process $(X_n)$ conditioned to remain in $E_M$ for all time. Via (\ref{Pol-eq2}), this yields the generalized P\'{o}lya urn conditioned so that the difference between the largest and smallest color counts never exceeds $M$.

\subsection{Time-Inhomogeneous Wright--Fisher Model Conditioned Never to Be Absorbed}
In this subsection, we consider a Wright--Fisher model with a fixed population size $n_0\ge 3$ and a time-varying selective advantage. For Wright–Fisher models with time-dependent selection, see \cite{Donnelly-Weber,Weber}. Assume that each individual is of one of two types, $\mathsf{a}$ or $\mathsf{A}$. Let $\sigma_n$ denote the selective advantage of type $\mathsf{a}$ in generation $n$. Assume that $\sigma_n>-1$ for all $n\ge0$ and that \begin{align}
\label{def-sigman}
  \sigma_n=s_1+\frac{s_2}{n}+\frac{s_3}{n^2}+O(n^{-3})\qquad \text{for some}\  s_1>-1,\ s_2,s_3\in \R
\end{align}
holds. Let $X_n$ denote the number of individuals of type $\mathsf{a}$ in generation $n\ge0$, with $X_0=x_0\in\{1,...,n_0-1\}$. We assume that generation $n+1$ is formed by sampling with replacement from generation $n$, taking into account the selective advantage of type $\mathsf{a}$. Conditional on $X_n=i$, the random variable $X_{n+1}$ has a binomial distribution with parameters $n_0$ and $p_n(i)$. Thus, the one-step transition probabilities are given by
\begin{align}
  K_n^{\text{WF}}(i,j)=\binom{n_0}{j}p_n(i)^j\Big(1-p_n(i)\Big)^{n_0-j}\qquad \text{for}\ n\ge 0,\ i,j\in \{0,...,n_0\},
\end{align}
where 
\begin{align}
  p_n(i):=\frac{(1+\sigma_n)i}{n_0+\sigma_ni}\qquad \text{for}\ i=0,...,n_0.
\end{align}
When $\sigma_n\equiv 0$, this model reduces to the neutral Wright--Fisher model. Let $(K_n^{\text{Kill,WF}})_{n\ge 0}$ denote the transition kernels of the process $(X_n)_{n\ge 0}$ killed upon reaching either of the absorbing states $0$ and $n_0$. Let $K^{\text{Kill,WF}}$ denote the corresponding killed transition kernel for $\sigma_n\equiv s_1>-1$, given by
\begin{align}
  K^{\text{Kill,WF}}(i,j)=\binom{n_0}{j}\frac{(1+s_1)^ji^j(n_0-i)^{n_0-j}}
{(n_0+s_1i)^{n_0}},
\qquad \text{for}\ i,j\in \{1,...,n_0-1\}.
\end{align}
By Taylor expansion, we have
\begin{align*}
\label{WF-eq2}
&\log\left(\frac{K_n^{\text{Kill,WF}}(i,j)}{ K^{\text{Kill,WF}}(i,j)}\right)\\
&\qquad =\log\left\{\left(\frac{1+\sigma_n}{1+s_1}\right)^j
\left(\frac{n_0+s_1i}{n_0+\sigma_ni}\right)^{n_0}\right\}\\
&\qquad =\frac{s_2}{n}\left(\frac{j}{1+s_1}-\frac{n_0i}{n_0+s_1i}\right)\\
&\qquad \qquad+\frac{1}{n^2}\left\{s_3\left(\frac{j}{1+s_1}-\frac{n_0i}{n_0+s_1i}\right)+\frac{s_2^2}{2}\left(-\frac{j}{(1+s_1)^2}+\frac{n_0i^2}{(n_0+s_1i)^2}\right)\right\}+O(n^{-3}).
\stepcounter{equation}\tag{\theequation}  
\end{align*}
Since all entries of $K_n^{\mathrm{Kill,WF}}$ and $K^{\mathrm{Kill,WF}}$ are strictly positive, (\ref{H1}) holds. Exponentiating (\ref{WF-eq2}), we find that assumption (\ref{H2}) holds with
\begin{align}
  a(i,j)&=s_2\left(\frac{j}{1+s_1}-\frac{n_0i}{n_0+s_1i}\right),\\
  b(i,j)&=s_3\left(\frac{j}{1+s_1}-\frac{n_0i}{n_0+s_1i}\right)+\frac{s_2^2}{2}\left(\left(\frac{j}{1+s_1}-\frac{n_0i}{n_0+s_1i}\right)^2-\frac{j}{(1+s_1)^2}+\frac{n_0i^2}{(n_0+s_1i)^2}\right).
\end{align}
Moreover, we have
\begin{align}
\label{WF-eq1}
  \text{Condition (\ref{C3}) holds}\qquad \Longleftrightarrow\qquad s_1s_2=0.
\end{align}
Indeed, if $s_1s_2=0$, then direct calculation shows that condition $(\ref{C3})$ holds with
\begin{align}
  V(i)=s_2i,\qquad \kappa=0.
\end{align}
Conversely, condition (\ref{C3}) implies
\begin{align}
  \kappa=a(i,i)=-\frac{s_1s_2i(n_0-i)}{(1+s_1)(n_0+s_1i)}.
\end{align}
For the right-hand side to be independent of $i$, it is necessary that $s_1s_2=0$. Thus, (\ref{WF-eq1}) is proved.

Therefore, Theorems \ref{thm1.1} and \ref{thm1.2} apply to the kernels $(K_n^{\text{Kill,WF}})$. Since the states $0$ and $n_0$ are both absorbing states of $X$, the Markov chain with transition kernels defined by (\ref{Qn-definition}), started from $x_0$, can be interpreted as the time-inhomogeneous Wright--Fisher model conditioned never to be absorbed at either boundary.

\begin{Rem}
The Wright--Fisher model conditioned to avoid absorption in the time-homogeneous setting has been studied by Huillet \cite{Huillet}. That paper also considers several other types of conditioning and quasi-stationary distributions.
\end{Rem}

\subsection{Time-Inhomogeneous Moran Model Conditioned Never to Be Absorbed}
In the Wright--Fisher model considered in the preceding subsection, the entire population was replaced in each generation. In contrast, we consider the Moran model, in which one individual is selected in each generation and replaced according to the current proportions of types $\mathsf{a}$ and $\mathsf{A}$ in the population. For Moran models with time-dependent selection, see \cite{Chalub-Souza}.

Let $n_0\ge 3$ be a population size, and let $\sigma_n>-1$ denote the selective advantage of type $\mathsf{a}$ in generation $n$. Assume that (\ref{def-sigman}) is satisfied. Let $X_n$ denote the number of individuals of type $\mathsf{a}$ in generation $n\ge 0$, with $X_0=x_0\in \{1,...,n_0-1\}$. Then the transition kernels of $(X_n)_{n\ge 0}$ are given by
\begin{align}
K_n^{\text{Mor}}(i,i+1)&=\frac{n_0-i}{n_0}\cdot \frac{(1+\sigma_n)i}{n_0+\sigma_n i},\\
K_n^{\text{Mor}}(i,i-1)&=\frac{i}{n_0}\cdot \frac{n_0-i}{n_0+\sigma_n i},\\
K_n^{\text{Mor}}(i,i)&=\frac{(n_0-i)^2+(1+\sigma_n)i^2}{n_0(n_0+\sigma_n i)}.
\end{align}
When $\sigma_n\equiv 0$, this model reduces to the neutral Moran model. Let $(K_n^{\text{Kill,Mor}})_{n\ge 0}$ denote the transition kernels of the process $(X_n)$ killed upon reaching either of the absorbing states $0$ and $n_0$. Let $K^{\text{Kill,Mor}}$ denote the corresponding killed transition kernel for $\sigma_n\equiv s_1>-1$. Then, we have
\begin{align}
\label{Moran-eq2}
  \frac{K_n^{\text{Kill,Mor}}(i,j)}{K^{\text{Kill,Mor}}(i,j)}=1+\frac{s_2A(i,j)}{n}
+\frac{\left(s_3-\frac{s_2^2i}{n_0+s_1i}\right)A(i,j)}{n^2}
+O(n^{-3}),\qquad \text{for}\ j=i,i\pm 1,
\end{align}
where
\begin{align}
A(i,j):=
\begin{cases}
\displaystyle \frac{1}{1+s_1}-\frac{i}{n_0+s_1i},
&\text{for}\ j=i+1,\\[10pt]
\displaystyle -\frac{i}{n_0+s_1i},
&\text{for}\ j=i-1,\\[10pt]
\displaystyle \frac{i^2}{(n_0-i)^2+(1+s_1)i^2}-\frac{i}{n_0+s_1i},
&\text{for}\ j=i.
\end{cases}
\end{align}
Since $\sigma_n>-1$, the killed kernels are irreducible and have the same support as $K^{\mathrm{Kill,Mor}}$, so (\ref{H1}) holds. Moreover, (\ref{Moran-eq2}) yields (\ref{H2}) with
\begin{align}
a(i,j)=s_2A(i,j),\qquad
b(i,j)=\left(s_3-\frac{s_2^2i}{n_0+s_1i}\right)A(i,j).
\end{align}
Moreover, we have
\begin{align}
\label{Moran-eq1}
  \text{Condition (\ref{C3}) holds}\qquad \Longleftrightarrow\qquad s_2=0.
\end{align}
Indeed, if $s_2=0$, then condition (\ref{C3}) holds with $V\equiv0$ and $\kappa=0$. Conversely, condition (\ref{C3}) implies $\kappa=a(i,i)=s_2A(i,i)$.  For the right-hand side to be independent of $i$, it is necessary that $s_2=0$. Thus, (\ref{Moran-eq1}) is proved.

Therefore, Theorems \ref{thm1.1} and \ref{thm1.2} apply to the kernels $(K_n^{\text{Kill,Mor}})$. Since the states $0$ and
$n_0$ are both absorbing states of $X$, the Markov chain with transition kernels defined by
(\ref{Qn-definition}), started from $x_0$, can be interpreted as the time-inhomogeneous Moran model conditioned never to be absorbed at either boundary.

\begin{Rem}
The time-homogeneous neutral Moran model conditioned never to be absorbed has been studied by Huillet \cite{Huillet-Moran}. Moreover, Chalub et al. \cite{Moran} use the conditioned process to study several equations.
\end{Rem}

\subsection{Noncolliding Time-Inhomogeneous Random Walks Conditioned to Stay in an Interval}
Consider $r\ge 1$ particles on the integer lattice $\Z$. At each time step, one particle is selected at random and moves one unit to the left or right. The process is killed whenever two particles collide. This model was introduced by Fisher \cite{Fisher}. In this subsection, we consider a model in which the particle movement probabilities depend on time.

For $i=1,...,r$, let $\xi_{n,i}$ denote the position of particle $i$ at time $n$. Assume that the initial configuration is given by $\xi_{0,i}=x_i$, where $x_1<\cdots <x_r$. At each time step $n$, one particle is selected uniformly at random and moves one unit to the right with probability $p_n$ or one unit to the left with probability $1-p_n$. The process is killed upon a collision between any two particles. Assume that $0<p_n<1$ for all $n\ge 0$ and that
\begin{align}
  p_n=p+\frac{\alpha}{n}+\frac{\beta}{n^2}+O(n^{-3}),\qquad \text{for some}\ 0<p<1,\ \alpha,\beta\in \R.
\end{align}
Set $X_n:=(\xi_{n,1},...,\xi_{n,r})$ and define
\begin{align}
  E_M:=\Big\{x\in \Z^r:\ -M\le x_1<\cdots <x_r\le M\Big\}\qquad \text{for}\ M\ge 1.
\end{align}
Then the transition kernels of $(X_n)_{n\ge 0}$ killed upon exiting $E_M$ are given by
\begin{align}
  K_n^{\text{Kill,NCTI}}(x,x+e_i)=\frac{p_n}{r},\qquad K_n^{\text{Kill,NCTI}}(x,x-e_i)=\frac{1-p_n}{r},
\end{align}
whenever $x,x+e_i\in E_M$ and $x,x-e_i\in E_M$, respectively. Let $K^{\text{Kill,NC}}$ denote the corresponding killed transition kernel for $p_n\equiv p$. This is the classical model introduced in \cite{Fisher}, and the case $r=1$ corresponds to the gambler's ruin model. Since $0<p_n<1$ and $0<p<1$, the kernels $(K_n^{\mathrm{Kill,NCTI}})$ and $K^{\mathrm{Kill,NC}}$ have the same support. Thus, assumption (\ref{H1}) holds. For every permitted transition, we have
\begin{align}
  \frac{K_n^{\text{Kill,NCTI}}(x,x+e_i)}{K^{\text{Kill,NC}}(x,x+e_i)}&=1+\frac{\alpha}{pn}+\frac{\beta}{pn^2}+O(n^{-3}),\\
  \frac{K_n^{\text{Kill,NCTI}}(x,x-e_i)}{K^{\text{Kill,NC}}(x,x-e_i)}&=1-\frac{\alpha}{(1-p)n}-\frac{\beta}{(1-p)n^2}+O(n^{-3}).
\end{align}
Thus, assumption (\ref{H2}) holds with
\begin{align}
a(x,x+e_i)=\frac{\alpha}{p}&,
\qquad a(x,x-e_i)=-\frac{\alpha}{1-p},\\
b(x,x+e_i)=\frac{\beta}{p}&,
\qquad b(x,x-e_i)=-\frac{\beta}{1-p}.
\end{align}
Moreover, a direct calculation shows that condition (\ref{C3}) holds with
\begin{align}
  V(x)=\frac{\alpha}{2p(1-p)}\sum_{i=1}^r x_i,\qquad \kappa=\frac{\alpha(1-2p)}{2p(1-p)}.
\end{align}
Thus, Theorems \ref{thm1.1} and \ref{thm1.2} apply to the kernels $(K_n^{\text{Kill,NCTI}})$. The
Markov chain with transition kernels defined by (\ref{Qn-definition}), started from $(x_1,...,x_r)$, can be interpreted as the system of time-inhomogeneous random walks conditioned never to collide and to remain in the interval $\{-M,\ldots,M\}$ for all time.

\begin{Rem}
In the time-homogeneous setting, Grabiner \cite{Grabiner} obtained determinant formulas for the number of noncolliding paths confined to a finite interval, while Krattenthaler \cite{Krattenthaler} derived the asymptotic behavior of the number of such paths as the number of steps tends to infinity.
\end{Rem}


\section*{Acknowledgement}
The author is deeply grateful to Kouji Yano, Syota Esaki, and Go Tokumitsu for their insightful comments, from which this work benefited greatly. The author acknowledges support from JSPS KAKENHI Grant Number 26KJ1605.

\section*{AI Use Statement}
The author used AI tools (ChatGPT 6.1) in the preparation of this manuscript. Specifically, AI was used to assist with mathematical discussions, improve and proofread the English writing, and conduct literature searches. All mathematical results and the final manuscript were written by the author, who takes full responsibility for the content of this paper.

\addcontentsline{toc}{section}{References}
\bibliographystyle{plain}

\begin{thebibliography}{10}

\bibitem{AZ}
M.~Andr^^c3^^a9 and L.~Zuazn^^c3^^a1bar.
\newblock Estimates on escape times for the elephant random walk, preprint, arXiv: 2602.18953.

\bibitem{Moran}
F.~A. C.~C. Chalub, L.~Monsaingeon, A.~Ribeiro, and M.~O. Souza.
\newblock Gradient flow formulations of discrete and continuous evolutionary models: a unifying perspective.
\newblock {\em Acta Appl. Math.}, 171:Paper No. 24, 50, 2021.

\bibitem{Chalub-Souza}
F.~A. C.~C. Chalub and M.~O. Souza.
\newblock On the stochastic evolution of finite populations.
\newblock {\em J. Math. Biol.}, 75(6-7):1735--1774, 2017.

\bibitem{Champagnat-Villemonais-2}
N.~Champagnat and D.~Villemonais.
\newblock Exponential convergence to quasi-stationary distribution and {$Q$}-process.
\newblock {\em Probab. Theory Related Fields}, 164(1-2):243--283, 2016.

\bibitem{Champagnat-Villeminais}
N.~Champagnat and D.~Villemonais.
\newblock Uniform convergence of penalized time-inhomogeneous {M}arkov processes.
\newblock {\em ESAIM Probab. Stat.}, 22:129--162, 2018.

\bibitem{CGP}
C.~F. Coletti, R.~de~M.~Grisi, and I.~Papageorgiou.
\newblock A spectral approach to survival bounds for the elephant random walk, preprint, arXiv: 2606.08884.

\bibitem{Debs-conditioning}
P.~Debs.
\newblock Penalisation of the symmetric random walk by several functions of the supremum.
\newblock {\em Markov Process. Related Fields}, 18(4):651--680, 2012.

\bibitem{Donnelly-Weber}
P.~Donnelly and N.~Weber.
\newblock The {W}right-{F}isher model with temporally varying selection and population size.
\newblock {\em J. Math. Biol.}, 22(1):21--29, 1985.

\bibitem{Durrett}
R.~Durrett.
\newblock {\em Probability: theory and examples}, volume~31 of {\em Cambridge Series in Statistical and Probabilistic Mathematics}.
\newblock Cambridge University Press, Cambridge, fourth edition, 2010.

\bibitem{Fisher}
M.~E. Fisher.
\newblock Walks, walls, wetting, and melting.
\newblock {\em J. Statist. Phys.}, 34(5-6):667--729, 1984.

\bibitem{Grabiner}
D.~J. Grabiner.
\newblock Random walk in an alcove of an affine {W}eyl group, and non-colliding random walks on an interval.
\newblock {\em J. Combin. Theory Ser. A}, 97(2):285--306, 2002.

\bibitem{Huillet}
T.~E. Huillet.
\newblock On discrete-time multiallelic evolutionary dynamics driven by selection.
\newblock {\em J. Probab. Stat.}, pages Art. ID 580762, 27, 2010.

\bibitem{Huillet-Moran}
T.~E. Huillet.
\newblock Siegmund duality with applications to the neutral {M}oran model conditioned on never being absorbed.
\newblock {\em J. Phys. A}, 43(37):375001, 14, 2010.

\bibitem{Klenke}
A.~Klenke.
\newblock {\em Probability theory---a comprehensive course}.
\newblock Universitext. Springer, Cham, third edition, 2020.

\bibitem{Knight}
F.~B. Knight.
\newblock Brownian local times and taboo processes.
\newblock {\em Trans. Amer. Math. Soc.}, 143:173--185, 1969.

\bibitem{Krattenthaler}
C.~Krattenthaler.
\newblock Asymptotics for random walks in alcoves of affine {W}eyl groups.
\newblock {\em S\'em. Lothar. Combin.}, 52:Art. B52i, 72, 2004/07.

\bibitem{Lambert}
A.~Lambert.
\newblock Completely asymmetric {L}\'evy processes confined in a finite interval.
\newblock {\em Ann. Inst. H. Poincar\'e{} Probab. Statist.}, 36(2):251--274, 2000.

\bibitem{Qin}
S.~Qin.
\newblock Cover times and ranges of elephant random walks, preprint, arXiv: 2609.17264.

\bibitem{Schutz-Trimper}
G.~M. Sch\"utz and S.~Trimper.
\newblock Elephants can always remember: Exact long-range memory effects in a non-{M}arkovian random walk.
\newblock {\em Physical Review E}, 70(4):045101, 10 2004.

\bibitem{Weber}
N.~C. Weber.
\newblock The {W}right-{F}isher model with varying selection.
\newblock {\em J. Appl. Probab.}, 23(2):504--508, 1986.

\end{thebibliography}

\end{document}